% LaTeX preamble
% Laurent Moonens
% March, 20, 2007
%=================

%===============
% DOCUMENT CLASS
%===============

%\documentclass[a4paper,12pt,reqno]{article}
\documentclass[12pt]{amsart}
%\documentclass[reqno]{book}

%=========
% PACKAGES
%=========

\usepackage{pgf,tikz}
\usetikzlibrary[patterns]
\usepackage{mathrsfs}
\usetikzlibrary{arrows}
\usepackage{xcolor}
\usepackage{soul}

\usepackage{fancyhdr}
\usepackage{stmaryrd}
\usepackage[scale=0.75]{geometry}
\usepackage{mathtools}
\usepackage{color}
\usepackage{url}
\usepackage{amsmath}
\usepackage{amsthm}
\usepackage{amssymb}
\usepackage{amscd}          			%diagrams

\usepackage{esint}

\usepackage{bbm}            			%bold numbers
\usepackage{mathrsfs}           		%nice calligraphy

\usepackage[latin1]{inputenc}
\usepackage[english]{babel}     		%if writing in French

\usepackage{graphicx}          		%uncomment for inclusion of graphic images

%\usepackage{makeidx}           		%uncomment these two lines
%\makeindex             				%for including an index

\usepackage{verbatim}           		%DO NOT comment this line

\usepackage[bookmarksnumbered,colorlinks,plainpages,backref]{hyperref}

\numberwithin{equation}{section}		%equation numbering by section

%\usepackage{cite}

%UNCOMMENT THE NEXT \begin{comment} ... \end{comment}
%IF WRITING IN FRENCH

%\begin{comment}

\theoremstyle{plain}
\newtheorem{Theorem}{Theorem}[section]

\newtheorem{Proposition}[Theorem]{Proposition}
\newtheorem{Conjecture}[Theorem]{Conjecture}
\newtheorem{theorem}[Theorem]{Theorem}
\newtheorem{proposition}[Theorem]{Proposition}
\newtheorem{corollary}[Theorem]{Corollary}
\newtheorem{lemma}[Theorem]{Lemma}

\theoremstyle{definition}
\newtheorem{Definition}[Theorem]{Definition}
\newtheorem{example}[Theorem]{Example}
\newtheorem{definition}[Theorem]{Definition}

\theoremstyle{remark}

\newtheorem{remark}[Theorem]{Remark}

%\end{comment}

%COMMENT THE NEXT \begin{comment} ... \end{comment}
%IF WRITING IN FRENCH

\begin{comment}

\theoremstyle{plain}

\theoremstyle{definition}

\theoremstyle{remark}

\end{comment}

\newcounter{alphatheo}
\renewcommand\thealphatheo{\Alph{alphatheo}}
\newenvironment{alphatheo}{\refstepcounter{alphatheo}\medskip\noindent{\bf Theorem \thealphatheo.} \it }{\medskip}

%=========================
% BLACK BOARD BOLD SYMBOLS
%=========================

%UNCOMMENT THE NEXT \begin{comment} ... \end{comment}
%IF WRITING IN FEDERER'S STYLE

%\begin{comment}
\def\M{{\mathcal M}}
\newcommand{\C}{\mathbb{C}}       %complex numbers
\newcommand{\N}{\mathbb{N}}     %natural numbers
     %rational numbers
       %sphere
       %sphere
       %circle/torus
\newcommand{\R}{\mathbb{R}}     %real numbers
     %real numbers

     %extended real numbers
         %integers

      %indicatrix function

%\end{comment}

% ====================
% CALLIGRAPHIC SYMBOLS
% ====================

%===================
% GREEK BOLD SYMBOLS
%===================

%=============
% BOLD SYMBOLS
%=============

%\newcommand{\bf}{\mathbf{f}}           %avoid using this one!

        %deviation
     %Haussdorf distance
        %spherical excess
        %Excess
          %rectifiable varifolds
		%strong charges

%===================
% MATH ROMAN SYMBOLS
%===================

				%adherence
				%approximate limits and derivatives
			%boundary
			%cardinal
				%closure
				%covering
			%diameter
			%distance
	%Hausdorff distance
\DeclareMathOperator{\diver}{div}			%divergence
\DeclareMathOperator*{\esssup}{ess\,sup}		%essential sup
\DeclareMathOperator*{\essinf}{ess\,inf}		%essential inf
				%frontiere
			%gradient
				%homomorphisms
				%the identity map
				%image
			%interieur
				%Lipschitz constant
				%Lipschitz constant
				%oscillation
				%set
				%set
			%support
			%support
				%support
				%tangent space or measure
			%trace
		%essential interior
		%adherence essentielle
			%essential closure
		%exterior primitive
\DeclareMathOperator*{\real}{Re}
\DeclareMathOperator*{\imag}{Im}

%================
% SPECIAL SYMBOLS
%================

\usepackage[mathcal]{eucal}					%for the set of BV subsets of Rn

%commands specific to topological measures

%\renewcommand{\o}{\calO}
%\renewcommand{\c}{\calC}
%\renewcommand{\a}{\calA}

% Svetlana's commands

\def\eps{\epsilon}

\def\L{{{\mathcal{L}}}}

\def\ccinf{C^\infty_{c}}
\def\ds{\displaystyle}
\def\rt{\rightarrow}

\def\ccinf{C^\infty_{c}}
\def\cinf{C^\infty}

\def\<{\langle}
\def\>{\rangle}
\def\eps{\varepsilon}

%restriction of a measure by W.F. Pfeffer

%interior product, variant of the above

%the join of two currents
%WARNING: use within the displaystyle environment only

%simplexes a la Federer, left and right

%binomial coefficient

%==============
% ABBREVIATIONS
%==============

\renewcommand{\geq}{\geqslant}
\renewcommand{\leq}{\leqslant}
\renewcommand{\epsilon}{\varepsilon}

%\linespread{1.3}
%\usepackage{times}

%================
% DOCUMENT BEGINS
%================

% NEW ADDED COMMANDS

		% added October 9, 2009
				%domain
				%domain

\allowdisplaybreaks

\title[Solvability of elliptic linear eq with measure data in $L^p_{w}$]{Solvability of elliptic  homogeneous linear equations with measure data in weighted Lebesgue spaces}

\author {Victor Biliatto}
\address{Departamento de Computa\c{c}\~ao e Matem\'atica, Universidade de S\~ao Paulo, Ribeir\~ao Preto, SP, 14040-901, Brazil}
\email{victor.biliatto@usp.br}

\author {Joel Coacalle}
\address{Departamento de Computa\c{c}\~ao e Matem\'atica, Universidade de S\~ao Paulo, Ribeir\~ao Preto, SP, 14040-901, Brazil}
\email{jcoacalle@usp.br}

\author {Tiago Picon}
\address{Departamento de Computa\c{c}\~ao e Matem\'atica, Universidade de S\~ao Paulo, Ribeir\~ao Preto, SP, 14040-901, Brazil}
\email{picon@ffclrp.usp.br}

\thanks{{All the authors were supported by the S\~ao Paulo Research Foundation (FAPESP) grant numbers 2025/00433-1, 2024/20985-6, 2022/02211-8 and  2018/15484-7. The third author was supported by Conselho Nacional de Desenvolvimento Cient\'ifico e Tecnol\'ogico (CNPq) grant number 311430/2018-0.}}

\subjclass[2010]{{47F05 35A23 35B45 35J48 28A12}}

\keywords{Divergence-measure vector fields, weighted Lebesgue solvability, $L^{1}$ estimates, elliptic equations, canceling operators.}

%\date{\today}

\begin{document}

\begin{abstract} Let $A(D)$ be an elliptic  homogeneous linear differential operator {with complex constant coefficients}, $ \mu $ be a vector-valued Borel measure and $w$ be a positive locally integrable function on $ \R^N$. In this {work}, we present sufficient conditions on $\mu$ and $w$ for the existence of solutions in the weighted Lebesgue spaces $L^p_w$ for the equation $A^*(D)f=\mu$, for $ 1\leq p<\infty $. Those conditions are related to a certain control of the Riesz potential of the measure $\mu$. We also present sufficient conditions for the solvability {when $p=\infty$} adding a canceling condition on {the operator}. Our method is based on a new weighted $L^1$ Stein-Weiss type inequality on measures for a special class of vector fields.
\end{abstract}

\maketitle

\section{Introduction}\label{sect intro}

Phuc and Torres presented in \cite{PT} a characterization for the Lebesgue solvability, in distributional sense, of the equation
\begin{equation} \label{eq div} 
\diver f=\mu
\end{equation}
with measure data $\mu$. They proved in \cite[Theorems 3.1 and 3.2]{PT} that if $\mu$ is a non-negative locally finite Radon measure on $\R^N$, then \eqref{eq div} has a solution in {$L^p(\R^N, \R^N)$ for $1<p<\infty$} if and only if $\mu$ has finite $(1,p)-$energy, \emph{i.e.} $\|I_1 \mu\|_{L^p}<\infty$, where $I_1$ is the Riesz potential operator. In particular, if there is a solution in $L^p(\R^{N}, \R^N)$ for $1 \leq p \leq N/(N-1)$, then $\mu \equiv 0$. In the case for bounded vector fields, they proved in \cite[Theorem 3.3]{PT} that if $\mu$ is a non-negative locally finite Radon measure on $\R^N$, there exists a vector field $f \in L^\infty(\R^N,\R^N)$ satisfying \eqref{eq div} if, and only if, $\mu$ has the property
\begin{equation*} \label{ahlfors PT}
\mu(B(x,r)) \leq M r^{N-1}, \quad \text{for all } r>0 \text{ and } x \in \R^N,
\end{equation*}
for some positive constant $M$ independent of $x$ and $r$. A key element of their proof is the validity of the estimate
\begin{equation} \label{desig PT}
\left| \int_{\R^N} \varphi(x) \, d\mu \right| \leq C \int_{\R^N} |\nabla \varphi (x)| \, dx
\end{equation}
for all $\varphi \in \ccinf(\R^N)$, which is attained using the boxing inequality and the coarea formula for functions of bounded variation.

Some time later in \cite{MRT}, Moonens, Russ and Tuominen extended these results in the setting of vector fields for weighted Lebesgue spaces. They showed in \cite[Theorem 3.21]{MRT} that for a non-negative Radon measure $\mu$ on $\R^N$ and a 1-admissible weight $w$, there exists a vector field $f \in L^\infty_{1/w}(\R^N,\R^N)$ solving \eqref{eq div} if $\mu$ satisfies
\begin{equation} \label{ahlfors MRT}
\mu(B(x,r)) \leq \dfrac{M}{r} \int_{B(x,r)} w(x) \, dx, \quad \text{for all } r>0 \text{ and } x \in \R^N,
\end{equation}
for some positive constant $M$ independent of $x$ and $r$. For each positive $w \in L^{1}_{loc}(\R^{N})$, we recall that $L^\infty_{1/w}(\R^N,\R^N)$ is the space of measurable functions $f:\R^N \rt \R^N$ for which $\|f/w\|_{{L^\infty}} < \infty$, and $w$ is called a 1-admissible weight if it is doubling and %the weighted space $(\R^N,w)$ 
supports a (1,1)-Poincar\'e inequality {(see \cite[{Definitions 2.1 and 2.2}]{MRT})}. Similar to the previous case, they used a weighted version of the boxing inequality and a coarea formula to show that \eqref{ahlfors MRT} implies the \emph{a priori} estimate
\begin{equation*} \label{desig MRT}
\int_{\R^N} |\varphi (x)| \, d\mu \leq C \int_{\R^N} |\nabla \varphi (x)| w(x) \, dx
\end{equation*}
for every compactly supported Lipschitz function $\varphi:\R^N \rt \R$. We point out that, when $w \equiv 1$, their result recovers one direction of \cite[Theorem 3.3]{PT}.

Recently, the first and third authors extended in \cite{BP} the results due to Phuc and Torres for the equation
\begin{equation} \label{eq A(D)} 
A^*(D)f=\mu,
\end{equation}
where $A(D): C^{\infty}(\R^N,E)\rightarrow C^{\infty}(\R^N,F)$ is a homogeneous linear differential operator {of order $1\leq m<N$} with complex constant coefficients \emph{i.e.} $A(D) = \sum_{|\alpha|=m} a_\alpha \partial^\alpha$,
where $E,F$ are finite dimensional complex vector spaces and the coefficients $a_\alpha \in \L(E,F)$ are linear transformations from $E$ to $F$ that do not depend on $x \in \R^N$. %\textcolor{olive}{The operator $ A(D) $, with these properties, will be called simply a homogeneous linear differential operator of order $1\leq m<N$ on $\R^N$ from $E$ to $F$.}
Here $A^*(D)$ is the (formal) adjoint operator of $A(D)$ and $\mu$ is a vector-valued complex measure on $\R^N$. {They proved in \cite[Theorem A]{BP} that if $1 \leq p \leq N/(N-m)$ and $f \in L^p(\R^N,F^{*})$ is a solution for \eqref{eq A(D)} when $\mu$ is {non-negative}, then $\mu \equiv 0$. In addition, if $N/(N-m) < p < \infty$ and  $f \in L^p(\R^N,F^{*})$ is a solution for \eqref{eq A(D)} then $\mu$ has finite $(m,p)-$energy. Conversely, if $|\mu|$ has finite $(m,p)-$energy and $A(D)$ is \emph{elliptic} then there exists a function $f \in L^p(\R^N,F^{*})$ solving \eqref{eq A(D)}}. The symbol $|\mu|$ denotes the total variation of the measure $\mu$ and we say a measure $\nu$ has finite $(m,p)-$energy if $\|I_m \nu\|_{L^p} < \infty$. 
We recall that the operator $A(D)$ is \emph{elliptic} if the symbol $A(\xi): E \rt F$ given by
\begin{equation}\nonumber
A(\xi) = \sum_{|\alpha|=m} a_\alpha \xi^\alpha
\end{equation}
is injective for all $\xi \in \R^{N}\setminus\left\{0\right\}$. In the proof, it is shown that if there exists a solution $f \in L^p(\R^N,F^*)$ for \eqref{eq A(D)}, then
\begin{equation}\label{eq:ImRalpha}
I_m \mu(x) = C \sum_{|\alpha|=m} a^{*}_\alpha \, R^\alpha f(x),
\end{equation}
where $R^\alpha$ is a composition of Riesz transforms, which are bounded in $L^p$  for $ 1<p<\infty $ and weak type for $p=1$. For bounded vector fields, it was proved the following result:

\begin{theorem}[{\cite[Theorem B]{BP}}]\label{theorem BP}
Let $A(D)$ be a homogeneous linear differential operator of order $1\leq m<N$ on $\R^N$ from $E$ to $F$
and $\mu$ be a Borel measure on $\R^N$ with values in $E^*$. If $A(D)$ is \emph{elliptic} and \emph{canceling}, and $\mu$ satisfies
\begin{equation}\label{ahlfors BP1}
|\mu|(B(0,r)) \leq M_1 r^{N-m}, \quad \text{for all } r>0,
%\|\mu\|_{0,N-m}:=\sup_{r>0} \frac{|\mu|(B_r)}{r^{N-m}}<\infty, \quad \text{with} \, B_{r}:=B(0,r)
\end{equation}
and the potential control
\begin{equation}\label{wolff BP1}
\ds\int_0^{{|y|/2}} \dfrac{{|\mu|(B(y,r))}}{r^{N-m+1}} \, dr \leq M_2, \quad  \text{ uniformly on } y ,
\end{equation}
then there exists $f \in L^\infty(\R^N,F^{*})$ solving \eqref{eq A(D)}.
\end{theorem}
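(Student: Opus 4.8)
Here is how I would approach the proof.

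The plan is to adapt the duality strategy of Phuc and Torres \cite{PT}: first reduce the solvability of \eqref{eq A(D)} in $L^{\infty}$ to a single $L^1$ \emph{a priori} estimate, and then establish that estimate from \eqref{ahlfors BP1} and \eqref{wolff BP1}. The first step is to show that it suffices to prove
\begin{equation}\label{plan-apriori}
\left|\int_{\R^{N}}\langle\varphi,d\mu\rangle\right|\leq C\int_{\R^{N}}|A(D)\varphi(x)|\,dx\qquad\text{for all }\varphi\in\ccinf(\R^{N},E).
\end{equation}
Indeed, granting \eqref{plan-apriori}, the assignment $A(D)\varphi\mapsto\int_{\R^{N}}\langle\varphi,d\mu\rangle$ is a well-defined bounded linear functional on the subspace $A(D)\big[\ccinf(\R^{N},E)\big]\subseteq L^1(\R^{N},F)$ (well-definedness being also a consequence of ellipticity, since $A(D)\varphi=0$ with $\varphi\in\ccinf$ forces $\varphi\equiv0$); extending it by Hahn--Banach and using $\big(L^1(\R^{N},F)\big)^{*}=L^{\infty}(\R^{N},F^{*})$ yields $f\in L^{\infty}(\R^{N},F^{*})$ with $\int_{\R^{N}}\langle A(D)\varphi,f\rangle\,dx=\int_{\R^{N}}\langle\varphi,d\mu\rangle$ for every test function, which is precisely $A^{*}(D)f=\mu$ in the sense of distributions.

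Two preliminary observations set up the proof of \eqref{plan-apriori}. First, \eqref{ahlfors BP1} and \eqref{wolff BP1} together force the \emph{global} upper regularity
\begin{equation}\label{plan-global}
|\mu|(B(x,r))\leq M\,r^{N-m}\qquad\text{for all }x\in\R^{N},\ r>0,
\end{equation}
with $M$ depending only on $M_1,M_2,N,m$: when $r\geq|x|/4$ one uses $B(x,r)\subseteq B(0,5r)$ and \eqref{ahlfors BP1}, and when $r<|x|/4$ one compares $|\mu|(B(x,r))\int_{r}^{2r}\rho^{m-N-1}\,d\rho$ with $\int_{0}^{|x|/2}|\mu|(B(x,\rho))\,\rho^{m-N-1}\,d\rho\leq M_2$. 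Second, ellipticity of $A(D)$ provides a matrix-valued kernel $L$, smooth on $\R^{N}\setminus\{0\}$ and positively homogeneous of degree $m-N$, such that $\varphi=L*A(D)\varphi$ for every $\varphi\in\ccinf(\R^{N},E)$. Substituting this representation into \eqref{plan-apriori} and using Fubini over $\supp\varphi$ rewrites the left-hand side as $\int_{\R^{N}}\langle A(D)\varphi(y),\Phi(y)\rangle\,dy$, where $\Phi$ is the potential of $\mu$ against the kernel $L^{*}$. Here the \emph{canceling} hypothesis plays its part in two complementary ways: it is what upgrades the elliptic representation to its limiting $L^1$ behaviour (the phenomenon underlying the estimate $\|\varphi\|_{L^{N/(N-m)}}\lesssim\|A(D)\varphi\|_{L^1}$ for elliptic canceling operators), so that the oscillation of $L^{*}$ makes the above pairing meaningful even though $\mu$ need not be finite and $I_{m}|\mu|$ may be identically $+\infty$; and, together with the fact that $A^{*}(D)$ annihilates polynomials of degree $<m$, it allows $\Phi$ to be modified by such polynomials (equivalently, to be estimated only in bounded mean oscillation) without affecting the integral.

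The core estimate is then a bound for $\Phi$ (in $L^{\infty}$ modulo polynomials of degree $<m$, i.e. in $\mathrm{BMO}$), obtained by splitting, for each fixed $y$, the $x$-integral defining $\Phi(y)$ into three pieces. On the near piece $\{|x-y|<|y|/2\}$ one has $\int_{|x-y|<|y|/2}|L^{*}(x-y)|\,d|\mu|(x)\lesssim\int_{0}^{|y|/2}\frac{|\mu|(B(y,r))}{r^{N-m+1}}\,dr\leq M_2$ by \eqref{wolff BP1}. On the intermediate piece $\{|x-y|\geq|y|/2,\ |x|\leq2|y|\}$ one has $|L^{*}(x-y)|\lesssim|y|^{m-N}$, so its contribution is $\lesssim|y|^{m-N}|\mu|(B(0,2|y|))\lesssim M_1$ by \eqref{ahlfors BP1}. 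On the far piece $\{|x|>2|y|\}$ one replaces $L^{*}(x-y)$ by its difference with a degree-$(m-1)$ Taylor polynomial in $y$ (the polynomial correction permitted above), estimates the remainder by $\lesssim|y|^{m}|x|^{-N}$ using the homogeneity of the derivatives of $L^{*}$, and applies \eqref{plan-global} to get $|y|^{m}\int_{|x|>2|y|}|x|^{-N}\,d|\mu|(x)\lesssim|y|^{m}(2|y|)^{-m}\lesssim M$; an analogous computation controls the oscillation of this far contribution. Pairing the resulting $\mathrm{BMO}$ bound for $\Phi$ against $A(D)\varphi$---which the canceling hypothesis places in the Hardy space $\mathcal{H}^{1}(\R^{N},F)$ with $\|A(D)\varphi\|_{\mathcal{H}^{1}}\lesssim\|A(D)\varphi\|_{L^1}$---yields \eqref{plan-apriori}, and with the first step the theorem follows.

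The main obstacle is the far-field analysis just sketched: because $|\mu|(B(0,r))$ is allowed to be comparable to $r^{N-m}$, neither the potential $L^{*}*\mu$ nor the positive Riesz potential $I_{m}|\mu|$ converges, so the estimate cannot be closed by passing to absolute values. One has to genuinely exploit the oscillation of $L$ guaranteed by the canceling hypothesis---both in the form of the polynomial correction of the kernel (legitimate precisely because $A^{*}(D)$ kills low-degree polynomials) and in the form of the Hardy-space membership of $A(D)\varphi$---and make it cooperate with the role of \eqref{wolff BP1} on the diagonal and of \eqref{ahlfors BP1} in the tails. This is the analogue, for general elliptic canceling operators of order $m$, of the combination of the boxing inequality and the coarea formula that powers the first-order divergence equation \eqref{eq div} in \cite{PT}, and it is the step where all the hypotheses of Theorem~\ref{theorem BP} are used simultaneously.
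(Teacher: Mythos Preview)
Your reduction to the a priori estimate \eqref{plan-apriori} via Hahn--Banach, the derivation of the global bound \eqref{plan-global}, and the use of the elliptic representation $\varphi=L\ast A(D)\varphi$ all match the paper's strategy, and your three-region split of the potential is close in spirit to what is done. The gap is in the last step: the assertion that the canceling hypothesis yields $\|A(D)\varphi\|_{\mathcal H^{1}}\lesssim\|A(D)\varphi\|_{L^{1}}$ is false. Already for $A(D)=\nabla$ on $\R^{N}$, $N\ge2$, take $\varphi_{\eps}$ a smooth approximation of $\chi_{B(0,1)}$; then $\|\nabla\varphi_{\eps}\|_{L^{1}}$ stays bounded (it tends to the perimeter), while the grand maximal function of $\partial_{1}\varphi_{\eps}$ is bounded below by $c/\dist(x,\partial B)$ near $\partial B$, so $\|\partial_{1}\varphi_{\eps}\|_{\mathcal H^{1}}\gtrsim\log(1/\eps)\to\infty$. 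The point of Van Schaftingen's theory is precisely that the limiting estimates hold \emph{without} $A(D)\varphi$ lying in $\mathcal H^{1}$. Your alternative reading---that $\Phi$ is in $L^{\infty}$ modulo a polynomial of degree $<m$, which $A(D)\varphi$ annihilates---also does not work as written: the Taylor correction you subtract on $\{|x|>2|y|\}$ has coefficients that depend on $|y|$ (the constant term $\int_{|x|>2|y|}L^{*}(x)\,d\mu(x)$ diverges logarithmically under \eqref{ahlfors BP1}), so it is not a genuine polynomial in $y$.

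The paper's route avoids forming $\Phi$ altogether. Writing $K(x,y)=\psi(y/|x|)K(x,0)+K_{2}(x,y)$, it bounds $\int\big|\int K(x,y)g(y)\,dy\big|\,d|\mu|(x)$ with $g=A(D)\varphi$. For the first piece the cancellation is exploited not via $\mathcal H^{1}$ but via the cocanceling estimate of \cite{DNP} (Lemma~\ref{mainlemma} here): since $L(D)g=0$ for a cocanceling $L$, one has $\big|\int\psi(y/|x|)g(y)\,dy\big|\lesssim\int_{|y|<|x|/2}(|y|/|x|)|g(y)|\,dy$, and the Hardy-type Lemma~\ref{weight1} together with \eqref{ahlfors BP1} closes that term. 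For $K_{2}$ one uses the pointwise bounds $|K_{2}(x,y)|\lesssim|x-y|^{m-N}$ when $|x|<4|y|$ and $|K_{2}(x,y)|\lesssim|y|\,|x|^{m-N-1}$ when $|x|\ge2|y|$, then Minkowski; this is where \eqref{wolff BP1} and \eqref{ahlfors BP1} enter. If you want to repair your argument, replace the $\mathcal H^{1}$--BMO duality by Lemma~\ref{mainlemma}: your near- and intermediate-field computations are essentially the paper's treatment of $K_{2}$, and your far-field estimate becomes the $K_{1}$ bound once the DNP lemma supplies the missing factor $|y|/|x|$.
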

An example of {non-negative} measure satisfying \eqref{ahlfors BP1} and \eqref{wolff BP1} is given by the weighted power $d\nu:=|x|^{-m}dx$ (see \cite[Remark 4.1]{BP}). We say that $A(D)$ is a \emph{canceling} operator when
\[\ds{{\bigcap_{\xi \in \R^{N}\setminus\left\{0\right\}}\,A(\xi)[E]=\left\{ 0 \right\}}}.\]
The theory of canceling operators is due to J. Van Schaftingen (see \cite{VS}), motivated by studies of some $L^1$ \emph{a priori} estimates for first order operators including the {div-curl} operators and chain complexes. We point out that $A(D)=\nabla$ is always elliptic and is canceling if and only if $N \geq 2$ and, {in this case}, $A^*(D)=\diver$, recovering \eqref{eq div}. Once again, the proof of the previous theorem follows from an \emph{a priori} estimate:
\begin{equation} \label{desig BP}
\left| \int_{\R^N} \varphi(x) \, d\mu \right| \leq C \int_{\R^N} |A(D) \varphi(x) | \, dx, \qquad  {\text{for all}} \,\,\varphi\in C^\infty_c(\R^N, E).
\end{equation}

In contrast to the proof of \eqref{desig PT}, which relies on the coarea formula specifically applied to the gradient operator, an alternative method was utilized to derive \eqref{desig BP}, leading to an improvement of the Stein-Weiss type inequality in $L^1$-norm stated in \cite[Lemma 3.2]{DNP} (see \cite[Lemma 4.3]{BP} for more details). 

In this work, {we continue the program started in \cite{BP} addressing} the solvability of equation \eqref{eq A(D)} for elliptic homogeneous differential operators in the weighted Lebesgue spaces. 
{As mentioned before, the method is to find} sufficient conditions on the measure $\mu$ and the weight $w$ for the \emph{a priori} estimate
%\begin{equation} \label{desig main}
%\int_{\R^N} |\varphi| \, d|\mu| \leq C \int_{\R^N} |A(D) \varphi| w \, dx
%\end{equation}
\begin{equation*}\label{desig main}
	{\left|\int_{\R^N} \varphi(x) \, d\mu \right|} \leq C \int_{\R^N} |A(D) \varphi (x)| w(x) \, dx, \qquad {\text{for all}} \,\, \varphi\in C^\infty_c(\R^n, E),
\end{equation*}
%\footnote{\textcolor{olive}{Estamos repetindo esta estimativa abaixo em \eqref{maineq}}} 
{to hold}. %\soult{Once we have \eqref{desig main}, a duality argument shows that a solution $f \in L^\infty_{1/w}(\R^N,F^*)$ for \eqref{eq A(D)} exists.} 
{Our first result concerns the solvability of \eqref{eq A(D)} on weighted Lebesgue space $L^p_w$ with $1 < p < \infty$.}\\

{
\begin{alphatheo}\label{theoA}
Let $A(D)$ be a homogeneous linear differential operator of order $1 \leq m<N$ on $\R^{N}$, $N\ge2$, from $E$ to $F$, $\mu \in \M(\R^N,E^{*})$ and  {$w\in L^{1}_{loc}(\R^N)$} be a {positive} weight.
\begin{itemize}
\item[(i)] If $1<p<\infty$, $w$ belongs to Muckenhoupt class $A_{p}$ and $f \in L^p_w(\R^N,F^{*})$ is a solution for \eqref{eq A(D)} then $\mu$ has finite $(m,p,w)-$energy, \emph{i.e.} $\|I_{m}\mu\|_{L_{w}^{p}}<\infty$. If $w \in A_{1}$ and $f \in L^1_w(\R^N,F^{*})$ is a solution for \eqref{eq A(D)} then $\mu$ has finite $(m,1,w)-$weak energy. 
\item[(ii)] Conversely, if $|\mu|$ has finite $(m,p,w)-$energy for $1<p<\infty$ and $A(D)$ is elliptic, then there exists a function $f \in L^p_w(\R^N,F^{*})$ solving \eqref{eq A(D)}.
\end{itemize}
\end{alphatheo}
}

{The previous result extends the \cite[Theorem A]{BP} since $w(x) \equiv 1$ belongs to the Muckenhoupt classes $A_{p}$ for all $1\leq p \leq \infty$. Details about $(m,p,w)-$energy and $(m,1,w)-$weak energy are described in Subsection \ref{energy}. A refinement of the first part of Theorem \ref{theoA} for small values of $p$, in the spirit of \cite[Theorem A(i)]{BP}, for weighted powers $w(x)=|x|^{\alpha}$, is presented in Proposition \ref{prop medida nula}. }

{Our second and main result concerns the solvability of \eqref{eq A(D)} on $L^\infty_{1/w}$ in the same scope due to Moonens, Russ and Tuominen in \cite{MRT}}.

\begin{alphatheo}\label{theoB}
Let $A(D)$ be a homogeneous linear differential operator of order $1\leq m<N$ on $\R^{N}$ from $E$ to $F$, $\mu \in \M(\R^N,E^{*})$ and {$w\in L^{1}_{loc}(\R^N)$} be a {positive} weight in $\R^N$. If $A(D)$ is elliptic and canceling, and both $\nu:=|\mu|$ and $w$ satisfy the following testing conditions
	\begin{equation}\label{tcA1}
		\int_{B^{c}(0,2|y|)} \dfrac{1}{|x|^{N-m+1}} \, d\nu(x) \leq C_{1} \,\frac{w(y)}{|y|}, \quad \quad a.e. \,\, y \in \R^{N},
	\end{equation}
and
	\begin{equation}\label{tcA2}
		\int_{B(0,4|y|)} \dfrac{1}{|x-y|^{N-m}} \, d\nu(x) \leq C_{2}\, w(y),
		\quad \quad a.e. \,\,y \in \R^{N},
	\end{equation}
{for some positive constants $C_1$ and $C_2$,} then there exists $f \in L^\infty_{1/w}(\R^N,F^{*})$ solving \eqref{eq A(D)}. 
\end{alphatheo}

A natural question arises about conditions for $\nu \in \M_{+}(\R^N)$ and $w \in L^{1}_{loc}(\R^N)$ positive that satisfy \eqref{tcA1} and \eqref{tcA2}. In Example \ref{rem:100}, we present a condition on $\nu$ such that the {testing} conditions are satisfied when $w(x)=|x|^{\alpha}$ for $0<\alpha<1$. {For a special class of weights $ w $, namely $ A_1 $-weights (see Section \ref{sect prelim}), the
Proposition \ref{propmain} %and Remark 
%\ref{ex3} %for $q=1$ and $\ell=m$, we 
show us that the stronger conditions 
\begin{equation}\label{decorigin}
	\nu(B(0,r)) \leq {C_3}\, r^{-m} \int_{B(0,r)} w(x) \, dx, \quad \text{ for any }r>0,
\end{equation}
and
\begin{equation}\label{wolff peso}
		 \int_0^{|y|/2} \frac{\nu (B(y,r))}{r^{N-m+1}}\,dr\leq C_4\, w(y),  \quad  \text{a.e.}\,\, y \in \R^{N} , 
\end{equation}
for some positive constants $C_3$ and $C_4$, imply \eqref{tcA1} and \eqref{tcA2}. {Moreover, we observe that a}
 stronger sufficient condition for 
the potential condition \eqref{wolff peso} is given by
\begin{equation*}\label{decoutorigin}
	\nu(B(x,r)) \leq {C}|x|^{-m} \int_{B(x,r)} w(y) \, dy, \qquad \text{ for any } 0<r<\frac{|x|}{2}.
\end{equation*}

 As a direct consequence of the previous comment, we state the following extension of Theorem \ref{theorem BP}:  

\begin{alphatheo}\label{theoC}
Let $A(D)$ be a homogeneous linear differential operator of order $1\leq m<N$ on $\R^{N}$ from $E$ to $F$, $\mu \in \M(\R^N,E^{*})$, and $w$ be an  $A_1-$weight. If $A(D)$ is elliptic and canceling, and both $\nu:=|\mu|$ and $w$ satisfy \eqref{decorigin} and  \eqref{wolff peso},
then there exists $f \in L^\infty_{1/w}(\R^N,F^{*})$ solving \eqref{eq A(D)}.	
\end{alphatheo}

We remark that the condition \eqref{decorigin} for $m=1$ and $\mu \in \M_+(\R^N)$ is precisely \eqref{ahlfors MRT} restricted to balls centered at the origin. Moreover, every weight in the $A_1$ class is 1-admissible in the sense of \cite[Definition 2.2]{MRT} (see \cite[Theorem 4]{Bjorn} for more details). Therefore, Theorem \ref{theoC} extends \cite[Theorem 3.21]{MRT} to higher-order operators for a subset of their admissible weights in the sense that it weakens condition \eqref{ahlfors MRT} by \eqref{decorigin}, with the additional potential condition \eqref{wolff peso}.
\\

The paper is organized as follows. In Section \ref{sect prelim} we present definitions, basic properties and examples regarding weights, {Riesz potentials, measures with finite energy} and cocanceling operators. We prove the Theorem \ref{theoA} in Section \ref{sect theoA}. The Section \ref{sect theoB} is devoted to the proof of Theorem \ref{theoB}, including its main ingredient, {\emph{i.e.} a Stein-Weiss type inequality in weighted Lebesgue space $L^1_w$ given by Lemma \ref{main}}. {We also provide an example which recovers the previous result for power weights given in \cite{BP}. }%\footnote{\textcolor{olive}{Esta frase parece um pouco vago, está se referindo ao Corollary 4.4 abaixo? Tal vez poderiamos adicionar: (see Corollary 4.4 below)}} 
In the Section \ref{sect theoC}, we prove {a preliminar proposition} allowing us to obtain Theorem \ref{theoC}. We finish with {Section \ref{sect applications}, where some general comments and applications are presented.} \\

\noindent \textbf{Notations:} In this work, the symbol $f \lesssim g$ means that there exists a constant $C > 0$, independent of both $f$ and $g$ , such that $f \leq C g$. Given a {measurable} set $A \subset {\R}^N$, we denote by $|A|$ its Lebesgue measure. We write $B=B(x,r)$ for the open ball with center $x$ and radius $r>0$. %\textcolor{red}{\st{By $B_r$, we mean the ball centered at the origin with radius $r$.}} 
We denote the complement of $B(x,r)$ 
%\textcolor{red}{\st{and $B_r$}} 
by $B^c(x,r)$.
%\textcolor{red}{\st{ and $B^c_r$, respectively}}. 
We fix $\fint_Q f(x) dx := \frac{1}{|Q|} \int_Q f(x)dx$. {Given an open subset $\Omega \subseteq \R^N$,} we denote by {$\M(\Omega)$} the set of {$\sigma$-finite} signed Borel measures on {$\Omega$}. We add the subscript {$\M_+(\Omega)$} to denote the {subset} of {non-negative} measures on {$\Omega$}. {We denote by $\M(\Omega, \mathbb{C})$ the set of complex-valued {$\sigma$-finite} Borel measures on $\Omega$ given by $\mu=\mu^{\real} + i\,\mu^{\imag}$, where $\mu^{\real}, \mu^{\imag} \in \mathcal{M}(\Omega)$, and $\M_+(\Omega, \mathbb{C})$ is the set of measures $\mu \in \M(\Omega, \mathbb{C})$ such that $\mu^{\real}, \mu^{\imag} \in \M_+(\Omega)$. For finite dimensional vector spaces $X$, we use the identification $X^* \cong X$ for simplicity.}

\section{Preliminaries}\label{sect prelim}

\subsection{Muckenhoupt weight classes }\label{subsect weights}

We say that a locally integrable function $w: \R^N \rt \R$ is a \emph{weight} if $w(x)>0$ for almost every $x \in \R^N$. {Given a measurable} set $U$, we write \[w(U) \doteq \int_U w(x) \, dx.\]}

For $1 \leq p <\infty $, {we denote by $L^p_w(\R^N,\R^M)$ the \emph{weighted $L^p$-space}}, which comprises all measurable functions $f: \R^N \rt \R^M$ such that $|f|^p w \in L^1(\R^N,\R^M)$, and we endow it with the norm
\[\|f\|_{L^p_w} := \left(\int_{\R^N} |f(x)|^p w(x) \, dx\right)^{1/p}.\]
{Similarly, we denote by $L^\infty_w(\R^N,\R^M)$ the \emph{weighted $L^\infty$-space}, which comprises all measurable functions $ f $ such that}
\[\|f\|_{L^\infty_w} := \|{f}w\|_{L^\infty}<\infty.\]

An important class of weights that will feature in this work is the class of \emph{$A_p$-weights}, defined by Muckenhoupt in \cite{Mucken}. {For $1<p<\infty$, a weight $w$ is an \emph{$A_p$-weight}, denoted by $w \in A_p$, if
\[
{[w]_{A_{p}}:=}\sup_{B} \left(\fint_B w(x) \, dx\right)\left(\fint_B w(x)^{1/(1-p)} \, dx\right)^{p-1} < \infty,
\]
and is an $A_1$-weight if}
\[
{[w]_{A_{1}}:=}\sup_{B} \left[ \left( \fint_{B} w(x) \, dx \right) \esssup\limits_{x\in B} \frac{1}{w(x)} \right]<\infty,
\]
where the supremum is taken over every ball $B \subset \R^N$. Notice that, if $w \in A_1$  then  %\textcolor{red}{excluir: there exists a constant $C>0$ such that}
\begin{equation}\label{A1}
\fint_{B} w(z) \, dz \leq {[w]_{A_{1}} \essinf_{z \in B} w(z)
\leq [w]_{A_{1}} w(y)}, \qquad \text{a.e.} \quad  y \in B
\end{equation}
for every ball $B \subset \R^N$. {Denoting by $M$ the Hardy-Littlewood maximal operator given by 
$$Mf(x)=\sup_{r>0}\fint_{B(x,r)}|f|(y)dy,$$ 
then $w \in A_1$ is equivalent to the condition $Mw(x)\leq [w]_{A_{1}}w(x)$ almost everywhere $x \in \R^{N}.$.} The weight $w(x)=|x|^\alpha$ belongs to the $A_1$ class if and only if  $-N < \alpha \leq 0$ (see \cite[p. 229]{Torch}) {and to the $A_p$ class, $1<p<\infty$, if and only if $-N < \alpha < N(p-1)$ (see \cite[p. 236]{Torch}). Notice that}
\[
\fint_{B(0,r)} |x|^\alpha dx = \frac{C_N}{r^N} \int_{0}^r s^{\alpha+N-1}ds =C_{N,\alpha} \, r^\alpha
\]
{if $\alpha > -N$. When $\alpha > 0$, then %$r^\alpha \longrightarrow \infty \text{ as }
the integral goes to infinity as $r \longrightarrow \infty$ and $\esssup\limits_{x \in B(0,r)} |x|^{-\alpha} = \infty$ for any $ r>0 $. As for $1<p<\infty$,
\[
\left(\fint_{B(0,r)} |x|^\frac{\alpha}{1-p}\, dx\right)^{p-1} = \left(\frac{C_N}{r^N} \int_{0}^r s^{\frac{\alpha}{1-p}+N-1}\, ds\right)^{p-1} = C_{N,\alpha,p} \, r^{-\alpha}
\]
if $\alpha < N(p-1)$.} We refer to \cite[Example 1.2.5]{Turesson2000} for more examples of weights in $A_p$ classes.

\subsection{Riesz potentials and measures with finite weighted energy}\label{energy}

{For $0 < m < N$ and $f$ a function in the Schwartz space $S(\R^N)$, consider the fractional integrals called \emph{Riesz potential operators} defined by
\[I_m f(x) = \dfrac{1}{\gamma(m)} \int_{\R^N} \dfrac{f(y)}{|x-y|^{N-m}} dy,\]
where $\gamma(m) := \pi^{N/2} 2^m \Gamma(m/2)/\Gamma\left({(N-m)}/{2}\right)$}. {It is clear that $\widehat{I_{m}f}(\xi)=|\xi|^{m}\widehat{f}(\xi)$ and $I_{m}$ is continuously extended from $L^{p}(\R^{N})$ to $L^{q}(\R^{N})$ if $\displaystyle{\frac{1}{q}:=\frac{1}{p}-\frac{m}{N}}$ for $1\leq p<q<\infty$ (see \cite[Chapter V]{Stein}).} For a finite dimensional complex vector space $X$ over $\C$, with $d \doteq \dim_{\C} X < \infty$, $\M(\Omega,X)$ means the set of all $X$-valued complex measures on $\Omega$, $\mu = (\mu_{1},\dots, \mu_{d})$, where $\mu_{\ell}=\mu_{\ell}^{\real} + i\,\mu_{\ell}^{\imag} \in \M(\Omega,\C)$ for all $\ell=1,\dots,d$. By $\M_+(\Omega,X)$ we mean the set of measures $\mu \in \M(\Omega,X)$ such that $\mu_{\ell} \in \M_+(\Omega,\C)$ for all $\ell=1,\dots,d$. For $\eta \in \M(\Omega,\C)$, we define the Riesz potential
\[I_m \eta(x) := \dfrac{1}{\gamma(m)} \int_{\Omega} \dfrac{1}{|x-y|^{N-m}} d\eta(y)\]
and, for measures $\mu \in \M(\Omega,X)$, we define $I_{m}\mu:=(I_{m}\mu_{1},\dots,I_{m}\mu_{d})$. {Next we present the definition of weighted energy used in Theorem \ref{theoA}.}
\begin{definition}
    Let $w$ be a weight and $\mu \in \M(\R^N,X)$. We say that $\mu$ has finite $(m,p,w)-$energy if $\|I_m \mu\|_{L^p_w} < \infty$ for $1 \leq p < \infty$ and $\mu$ has finite $(m,1,w)-$weak energy if
\[\|I_m \mu\|_{L^{1,\infty}_w} \doteq \sup_{\lambda > 0} \lambda \, w\left( \{x: |I_m \mu(x)|>\lambda\} \right) <\infty.\]
\end{definition}

We recall the definition of the \emph{Riesz transforms} $R_j$, for $j=1,\dots,N$, given by
\[R_j f(x) = \lim_{\eps \to 0^+} c_N \int_{|x-y|>\eps} f(y) \dfrac{x_j-y_j}{|x-y|^{N+1}} \, dy\]
for $f \in S(\R^N)$ and $c_N = \Gamma\left(\frac{N+1}{2}\right)/\pi^{(N+1)/2}$. {Another characterization is given by symbols $\widehat{R_j f}(\xi) = -i \dfrac{\xi_j}{|\xi|} \hat{f}(\xi)$ and clearly we conclude $\sum_{j=1}^{N}R^{2}_{j}f=-f$ for all $f \in  S(\R^N)$.}
It follows from \cite[Theorems 3.1, 3.5 and 3.7 in Chapter 4]{GCF} that the Riesz transforms are continuously extended to $L^p_w$ (\emph{i.e.} $\|R_j f\|_{L^p_w} \lesssim \|f\|_{L^p_w}$), for $1<p<\infty$, if and only if $w \in A_p$. For the limit case $p=1$, $w \in A_1$ if and only if the Riesz transforms are of weak type $(1,1)$ with respect to $w$, \emph{i.e.} for every $f \in L^1_w(\R^N)$ and $j=1,\dots,N$,
\[\|R_j f\|_{L^{1,\infty}_w} = \sup_{\lambda > 0} \lambda \, w\left( \{x: |R_j f(x)|>\lambda\} \right) \lesssim \|f\|_{L^1_w}.\]

{Now, let $\alpha=(\alpha_1,\dots,\alpha_N)$ be a multi-index. We define the \emph{Riesz transform of order $\alpha$} as the operator
\[R^\alpha f \doteq \left(R_1^{\alpha_1} \circ R_2^{\alpha_2} \circ \cdots \circ R_N^{\alpha_N}\right) f,\]
for $f \in S(\R^N)$, where $R_j^{\alpha_j}$ is the composition $R_j \circ R_j \circ \cdots \circ R_j$ for $\alpha_j$ times. Naturally, the boundedness properties of $R_j$ {on $L^{p}_{w}(\R^{N})$} are extended to $R^\alpha$.  %It is 
Next we prove the first part of Theorem \ref{theoA} as a direct consequence of the continuity of $R^{\alpha}$.}

\begin{proposition}\label{prop finite 1}
Let $A(D)$ be a homogeneous linear differential operator of order $1 \leq m<N$ on $\R^{N}$, $N\ge2$, from $E$ to $F$, $\mu \in \M(\R^N,E)$ and $w$ be a weight in $\R^N$.
{If $1<p<\infty$, $w \in A_{p}$ and $f \in L^p_w(\R^N,F^{*})$ is a solution for \eqref{eq A(D)} then $\mu$ has finite $(m,p,w)-$energy. If $w \in A_{1}$ and $f \in L^1_w(\R^N,F^{*})$ then $\mu$ has finite $(m,1,w)-$weak energy.} 
\end{proposition}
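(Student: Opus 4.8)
The plan is to exploit the identity \eqref{eq:ImRalpha}, namely that any solution $f\in L^p_w(\R^N,F^*)$ of $A^*(D)f=\mu$ satisfies, at the level of distributions,
\[
I_m\mu(x)=C\sum_{|\alpha|=m}a_\alpha^*\,R^\alpha f(x),
\]
where each $R^\alpha$ is a composition of $m$ Riesz transforms. This identity was established in \cite{BP} purely via Fourier symbols: applying $I_m$ to $A^*(D)f=\mu$ and using that the symbol of $\partial^\alpha$ is $(i\xi)^\alpha$ while that of $R^\alpha$ is $(-i\xi/|\xi|)^\alpha$ times $|\xi|^{-m}\cdot|\xi|^m$, so that $\widehat{I_m\mu}=|\xi|^m\hat\mu$ matches $\sum_{|\alpha|=m}a_\alpha^*(i\xi)^\alpha|\xi|^{-m}\hat f$. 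Since this derivation does not use any integrability of $f$ beyond being a tempered distribution, it transfers verbatim to the weighted setting; I would simply cite it.

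First I would fix notation and recall from Subsection \ref{energy} that $R_j$ (hence each $R^\alpha$, being an $m$-fold composition) extends to a bounded operator on $L^p_w(\R^N)$ when $1<p<\infty$ and $w\in A_p$, by \cite[Theorems 3.1, 3.5 and 3.7 in Chapter 4]{GCF}; and that for $p=1$ and $w\in A_1$, $R_j$ is of weak type $(1,1)$ with respect to $w$. Then, in the case $1<p<\infty$: given the solution $f\in L^p_w(\R^N,F^*)$, each component $R^\alpha f$ lies in $L^p_w$ with $\|R^\alpha f\|_{L^p_w}\lesssim\|f\|_{L^p_w}$, and applying the finite-dimensional linear maps $a_\alpha^*$ and summing over the finitely many multi-indices $|\alpha|=m$ gives $\|I_m\mu\|_{L^p_w}\lesssim\|f\|_{L^p_w}<\infty$, i.e. $\mu$ has finite $(m,p,w)$-energy.

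For the endpoint $p=1$ with $w\in A_1$, the argument is the same except that boundedness is replaced by the weak-type estimate: each $R^\alpha f$ satisfies $\|R^\alpha f\|_{L^{1,\infty}_w}\lesssim\|f\|_{L^1_w}$. Here one must be slightly careful because an $m$-fold composition of weak-type-$(1,1)$ operators is not automatically weak type $(1,1)$; however, $w\in A_1$ implies $w\in A_q$ for every $q>1$, so the intermediate compositions $R_j\circ\cdots$ can be run through some $L^q_w$ with $1<q<\infty$ (where all $R_j$ are strongly bounded) and only the final application of a single Riesz transform uses the weak-type bound — alternatively, one invokes directly that Riesz potentials of $A_1$-densities obey such estimates. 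Then, writing $I_m\mu=C\sum_{|\alpha|=m}a_\alpha^* R^\alpha f$ as a finite sum, a standard union bound on the super-level sets $\{|I_m\mu|>\lambda\}$ together with $w$ being a measure gives $\|I_m\mu\|_{L^{1,\infty}_w}\lesssim\|f\|_{L^1_w}<\infty$, so $\mu$ has finite $(m,1,w)$-weak energy. The only genuinely delicate point is this handling of the composition at the $L^1$ endpoint; everything else is a direct transcription of the unweighted proof in \cite{BP} with ``bounded on $L^p$'' replaced by ``bounded on $L^p_w$ for $w\in A_p$''.
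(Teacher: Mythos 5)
Your overall route is exactly the paper's: invoke the identity \eqref{eq:ImRalpha} from \cite[Proposition 3.1]{BP} and conclude from the weighted mapping properties of the Riesz transforms of order $\alpha$. For $1<p<\infty$ with $w\in A_p$ your argument is complete and coincides with the paper's one-line proof.

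The gap is in your treatment of the endpoint $p=1$. The remedy you propose --- running the intermediate compositions through some $L^q_w$ with $1<q<\infty$ and reserving the weak-type bound for the last Riesz transform --- cannot be carried out: the hypothesis only gives $f\in L^1_w$, so there is no space $L^q_w$ in which the first $m-1$ transforms can be applied with strong bounds (after one application of $R_j$ you only control a weak-$L^1_w$ quasinorm, to which no further $R_j$ estimate applies), and conversely if the input to the final transform were in $L^q_w$ then the weak-type $(1,1)$ estimate would not be the relevant one. Your alternative remark about ``Riesz potentials of $A_1$-densities'' is not a proof of the required bound either. The correct justification --- and the one implicitly behind the paper's remark that ``the boundedness properties of $R_j$ are extended to $R^\alpha$'' --- is to treat $R^\alpha$ as a \emph{single} operator: its symbol $\left(-i\xi/|\xi|\right)^{\alpha}$ is smooth away from the origin and homogeneous of degree $0$, so $R^\alpha$ is a convolution Calder\'on--Zygmund singular integral, and the weighted theory cited in the paper (\cite[Chapter 4]{GCF}) yields directly that $R^\alpha$ is of weak type $(1,1)$ with respect to every $w\in A_1$. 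With that single estimate in hand, your finite-sum and level-set (union bound) argument giving $\|I_m\mu\|_{L^{1,\infty}_w}\lesssim\|f\|_{L^1_w}<\infty$ goes through, and the proof then matches the paper's.
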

{
\begin{proof}
 The proof follows from the identity \eqref{eq:ImRalpha} obtained in  \cite[Proposition 3.1]{BP} and the boundedness of Riesz transforms of order $\alpha$ in $L^p_w(\R^{N})$ for each $|\alpha|=m$, where $w \in A_{p}$. 
\end{proof}
}

{In the case {$w(x) \equiv1$}, the Proposition 2.1 in \cite{BP} asserts that for small values of $p$, precisely for $1<p\leq N /(N-m)$, if a measure $\mu \in \M_+(\Omega,X)$ has finite $(m,p)-$energy or {finite $(m,1)-$weak energy}, then $\mu \equiv 0$ in $\Omega$. In view of Proposition \ref{prop finite 1}, it is natural {to expect} a weighted extension of this result, and one would be tempted to relate it to $A_p$ classes. The following result for power weights shows that this is not precise:}

\begin{proposition}\label{prop medida nula}
    Let $\Omega \subseteq \R^N$ be an open set, $X$ be a finite dimensional complex vector space and $\mu \in \M_+(\Omega,X)$. Let $0 < m < N$ and consider the power weight $w(x)=|x|^\alpha$ in $\R^N$. {If $1<p\leq(N+\alpha)/(N-m)$, $\alpha > -m$ and $\mu$ has finite $(m,p,w)-$energy then $\mu \equiv 0$ in $\Omega$. The same conclusion holds if $\alpha > -N$ and $\mu$ has finite $(m,1,w)-$weak energy.}
\end{proposition}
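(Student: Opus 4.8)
The plan is to reduce the statement to the unweighted case already established in \cite[Proposition 2.1]{BP} by a localization-plus-scaling argument. First I would fix an arbitrary point $x_0 \in \Omega$ and a radius $\rho > 0$ with $\overline{B(x_0,\rho)} \subset \Omega$, and show that $\mu(B(x_0,\rho)) = 0$; since $x_0$ and $\rho$ are arbitrary this gives $\mu \equiv 0$ in $\Omega$. The key observation is that on the ball $B(x_0,\rho)$ the weight $w(x) = |x|^\alpha$ is comparable to a constant when $x_0 \neq 0$, so finiteness of the weighted energy forces finiteness of the ordinary $(m,p)$-energy \emph{locally}, and then one invokes the unweighted result. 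The only genuinely delicate point is the case $x_0 = 0$ (or more generally balls touching the origin), where the weight degenerates or blows up; this is exactly where the hypotheses $\alpha > -m$ (for $1<p\le (N+\alpha)/(N-m)$) and $\alpha > -N$ (for the weak $L^1$ case) must be used.

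For the core estimate, recall that for $y \in B(x_0, \rho/2)$ and the restricted measure $\mu_0 := \mu \llcorner B(x_0,\rho)$ one has
\[
I_m \mu_0(y) = \frac{1}{\gamma(m)} \int_{B(x_0,\rho)} \frac{d\mu(z)}{|y-z|^{N-m}} \ge \frac{1}{\gamma(m)} \int_{B(x_0,\rho)} \frac{d\mu(z)}{(2\rho)^{N-m}} = c_{\rho}\, \mu(B(x_0,\rho)).
\]
Hence $I_m\mu_0(y) \ge c_\rho\, \mu(B(x_0,\rho))$ on all of $B(x_0,\rho/2)$. If $\mu(B(x_0,\rho)) > 0$, then $I_m\mu_0$ is bounded below by a positive constant on $B(x_0,\rho/2)$, and since $I_m \mu \ge I_m \mu_0 \ge 0$ pointwise (as $\mu \in \M_+$), the same lower bound holds for $I_m\mu$. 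Therefore
\[
\|I_m \mu\|_{L^p_w}^p \ge \int_{B(x_0,\rho/2)} |I_m\mu(y)|^p |y|^\alpha\, dy \ge \big(c_\rho\, \mu(B(x_0,\rho))\big)^p \int_{B(x_0,\rho/2)} |y|^\alpha\, dy,
\]
and I must argue the last integral is $+\infty$ to reach a contradiction. This is where scaling enters: the scale-invariant balance is $\||x|^\alpha\|$ against the growth of the potential, and the borderline exponent $p = (N+\alpha)/(N-m)$ is precisely dictated by the homogeneity of $I_m$ under dilations $x \mapsto \lambda x$ together with the homogeneity degree $\alpha$ of the weight. I expect the cleanest route is to run the dilation argument of \cite[Proposition 2.1]{BP} directly: assuming $\mu(B(0,R))>0$ for some $R$, rescale $\mu$ by $\mu_\lambda(A) := \mu(\lambda A)$, use $I_m(\mu_\lambda)(x) = \lambda^{m-N}(I_m\mu)(\lambda x)$, compute $\|I_m\mu_\lambda\|_{L^p_w}^p = \lambda^{(m-N)p - N - \alpha}\|I_m\mu\|_{L^p_w}^p$, and observe that when $(m-N)p - N - \alpha \ge 0$, i.e. $p \le (N+\alpha)/(N-m)$, letting $\lambda \to 0$ (concentrating mass near $0$) while the total mass of $\mu_\lambda$ near a fixed ball stays bounded below forces $\|I_m\mu\|_{L^p_w} = \infty$. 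The condition $\alpha > -m$ guarantees $(N+\alpha)/(N-m) > 1$ so the range of $p$ is nonempty, and $\alpha > -N$ ensures $|x|^\alpha$ is locally integrable so the weighted space is meaningful.

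For the weak-type endpoint, the argument is parallel: from $I_m\mu(y) \ge c_\rho\,\mu(B(x_0,\rho)) =: \lambda_0 > 0$ on $B(x_0,\rho/2)$ one gets $w(\{|I_m\mu| > \lambda_0/2\}) \ge w(B(x_0,\rho/2)) = \int_{B(x_0,\rho/2)} |y|^\alpha dy$, and then either this is infinite (balls containing the origin, using the dilation/homogeneity bookkeeping which now reads $(m-N) - N - \alpha \ge 0$ automatically for the relevant concentration, with $\alpha > -N$ keeping things finite enough to scale) or, for balls away from the origin, one compares to the unweighted weak-energy statement of \cite[Proposition 2.1]{BP} after noting $w \asymp$ const locally. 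The main obstacle, as noted, is making the origin case rigorous: one cannot simply say "$w$ is comparable to a constant," and must instead exploit the exact scaling/homogeneity to see that concentrating $\mu$ at $0$ makes the weighted (weak) energy blow up precisely in the stated $p$-range; I would model this step closely on the proof of \cite[Proposition 2.1]{BP}, replacing Lebesgue measure by $|x|^\alpha dx$ throughout and tracking the extra factor $\lambda^{-\alpha}$ that the weight contributes under dilation.
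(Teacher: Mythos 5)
There is a genuine gap: your argument never produces the divergence that forces $\mu\equiv 0$. In the paper's proof the contradiction comes from the behavior \emph{at infinity}: for any fixed $r>0$ one has the global lower bound $I_m\mu_\ell(x)\gtrsim \mu_\ell(\Omega\cap B(0,r))\,(|x|+r)^{-(N-m)}$ for \emph{every} $x\in\R^N$, and then $\int_{\R^N}|x|^\alpha(|x|+r)^{-(N-m)p}\,dx$ diverges (the tail integral $\int^\infty s^{\alpha+N-1-(N-m)p}\,ds$) precisely when $p\le(N+\alpha)/(N-m)$; in the weak case one measures the superlevel sets of this globally decaying minorant, which are large balls $B\bigl(0,(\mu_\ell/\lambda)^{1/(N-m)}-r\bigr)$ whose weighted measure makes $\lambda\,w(\{\cdot>\lambda\})$ blow up for small $\lambda$. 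Your core estimate instead keeps only the lower bound on the fixed bounded ball $B(x_0,\rho/2)$, so the factor you need to be infinite is $\int_{B(x_0,\rho/2)}|y|^\alpha\,dy$, which is finite whenever $\alpha>-N$; no contradiction follows, as you yourself observe. The pivot to a dilation argument does not repair this: the measure $\mu$ is fixed, and rescaling it merely multiplies its (finite) weighted energy by a power of $\lambda$, which contradicts nothing as $\lambda\to 0$; moreover your exponent bookkeeping is off, since $(m-N)p-N-\alpha\ge 0$ is equivalent to $(N-m)p\le-(N+\alpha)$, which never holds when $\alpha>-N$, so it is not the condition $p\le(N+\alpha)/(N-m)$. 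Scaling explains why that exponent is critical, but it is a heuristic, not a proof.

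The localization-plus-comparability reduction to the unweighted \cite[Proposition 2.1]{BP} also cannot be made to work, because that proposition is itself proved by the same divergence-at-infinity mechanism, which localization destroys. Concretely, for $\alpha<0$ the weight degenerates at infinity, so finiteness of $\|I_m\mu\|_{L^p_w}$ does not imply finiteness of the global unweighted $(m,p)$-energy of the restricted measure (the comparability $w\asymp\mathrm{const}$ holds only on the bounded ball, while the unweighted energy must be controlled on all of $\R^N$); for $\alpha>0$ the implication goes the right way but the unweighted result only covers $p\le N/(N-m)$, leaving the range $N/(N-m)<p\le(N+\alpha)/(N-m)$ untreated. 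The correct route is the paper's: reduce to componentwise non-negative measures, use the global minorant $\mu_\ell(\Omega\cap B(0,r))(|x|+r)^{-(N-m)}$ of $I_m\mu_\ell$, integrate it against $|x|^\alpha$ over all of $\R^N$ (or compute the weight of its superlevel sets in the weak case), conclude $\mu_\ell(\Omega\cap B(0,r))=0$ for every $r$, and exhaust $\Omega$ by the balls $B(0,k)$.
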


\begin{proof} 
{The proof is \textit{bis in idem} of Proposition 2.1 in \cite{BP} and it will be present for the sake of completeness.} Without loss of generality, assume $\mu_\ell$ are {non-negative} measures for each $\ell$. For each $r>0$, we have
	\begin{equation}\label{eq:Immuell}
	I_m \mu_{\ell}(x)
    \gtrsim \int_{\Omega \cap B(0,r)} \dfrac{1}{|x-y|^{N-m}} \, d\mu_{\ell}(y)
        \geq \dfrac{\mu_{\ell}(\Omega \cap B(0,r))}{(|x|+r)^{N-m}}.
	\end{equation}
    Hence, for $1<p<\infty$,
    \begin{align*}
    \int_{\R^N} |I_m \mu(x)|^p \, |x|^\alpha \, dx &\gtrsim \int_{\R^N} [I_m \mu_{\ell}(x)]^p \, |x|^\alpha \, dx \gtrsim \int_{\R^N} \left[\dfrac{\mu_{\ell}(\Omega \cap B(0,r))}{(|x|+r)^{N-m}}\right]^p \, |x|^\alpha \, dx\\
    &= \left[\mu_{\ell}(\Omega \cap B(0,r))\right]^p \int_{\R^N} \dfrac{|x|^\alpha}{(|x|+r)^{(N-m)p}} \, dx.
    \end{align*}
    However 
    \[\int_{\R^N} \dfrac{|x|^\alpha}{(|x|+r)^{(N-m)p}} \, dx = |S^{N-1}| \int_0^\infty \dfrac{s^{\alpha+N-1}}{(s+r)^{(N-m)p}} \, ds\]
    and this last integral blows up to infinity when $1<p \leq \frac{N+\alpha}{N-m}$. Therefore, in this case, since $\mu$ has finite $(m,p,w)-$energy, we conclude that $\mu_{\ell}(\Omega \cap B(0,r))=0$. \\
    For $p=1$, {from \eqref{eq:Immuell} we have} 
    \[\sup_{\lambda > 0} \lambda \, w\left(\left\{x \in \R^{N}\,:\, \frac{\mu_{\ell}(\Omega \cap B(0,r))}{(|x|+r)^{N-m}} > \lambda\right\}\right) \lesssim  
    \|I_m \mu\|_{L^{1,\infty}_w}<\infty.\]
   Thus,
    \begin{align*}
	\lambda\,w\left(\left\{x: \frac{\mu_{\ell}(\Omega \cap B(0,r))}{(|x|+r)^{N-m}}>\lambda\right\}\right)
	&= \lambda \int_{B\left(0,\left(\frac{\mu_{\ell}(\Omega \cap B(0,r))}{\lambda}\right)^{1/(N-m)}-r\right)} |x|^\alpha \, dx \\
	&= \dfrac{|S^{N-1}|}{\alpha+N}\lambda^{-\frac{m+\alpha}{N-m}}\, \left(\left[\mu_{\ell}\left(\Omega \cap B(0,r)\right)\right]^{\frac{1}{N-m}}-\lambda^{\frac{1}{N-m}}r\right)^{\alpha+N},
\end{align*}
    which blows up to infinity when $\lambda > 0$ is small and $\mu_{\ell}\left(\Omega \cap B(0,r)\right) \neq 0$. Hence, once again, $\mu_{\ell}\left(\Omega \cap B(0,r)\right)=0$. Since $r>0$ was chosen arbitrarily, and $\ds{\Omega = \bigcup_{k \in \N} [\Omega \cap B(0,k)]}$, we must have $\mu_\ell \equiv 0$ on $\Omega$ for every $\ell$. Therefore, $\mu \equiv 0$.
\end{proof}

{
We remark that, from the properties of $A_p$ weights discussed in Subsection \ref{subsect weights}, the following situations occur: %Firstly, consider $1<p<\infty$.
if $N(p-1)-mp \leq \alpha < N(p-1)$, then $|x|^\alpha \in A_p$ and Proposition \ref{prop medida nula} holds; if $\alpha \geq N(p-1)$, then $|x|^\alpha \notin A_p$ but Proposition \ref{prop medida nula} still holds; if $-N < \alpha < N(p-1)-mp$, then $|x|^\alpha \in A_p$ but the Proposition \ref{prop medida nula} can not be applied to conclude $ \mu \equiv 0 $. The same argument holds for $ p=1 $: if $-N < \alpha \leq 0$, then $|x|^\alpha \in A_1$ and Proposition \ref{prop medida nula} holds; if $\alpha > 0$, then $|x|^\alpha \notin A_1$ but Proposition \ref{prop medida nula} still holds.
}

\subsection{Canceling and cocanceling operators}\label{subsect cocanc}

{Van Schaftingen proved the following important property of elliptic operators in \cite[Proposition 4.2]{VS}:} {if $A(D): \cinf(\R^N,E) \rt \cinf(\R^N,F)$ is an elliptic homogeneous differential operator, then there exists another homogeneous linear differential operator $L(D): \cinf(\R^N,F) \rt \cinf(\R^N,V)$, for some finite dimensional complex vector space $V$, such that
	$$ker\,L(\xi) = A(\xi)[E]$$
	for every $\xi \in \R^N \setminus \{0\}$.}

A homogeneous linear differential operator $L(D)$  
on $\R^{N}$ from $F$ to $V$ is called 
\emph{cocanceling} if 
$$\displaystyle{\bigcap_{\xi \in \R^{N}\backslash \left\{ 0 \right\}}ker\,L(\xi)=\left\{ 0 \right\}}.$$
{Consequently, if $A(D)$ is elliptic and canceling, then the operator $L(D)$ from {\cite[Proposition 4.2]{VS}}
%the result above 
is cocanceling{, and thus $L(D)(A(D)u) = 0$} for every ${u \in \cinf(\R^N,E)}$.
}

A classic example of a cocanceling operator on $\R^N$ from $F=\R^N$ to $V=\R$ is the divergence operator $L(D)=\diver$. {In fact, $L(\xi)[e]=\xi \cdot e$ for every $e \in \R^N$, and therefore $ ker\,L(\xi)= \xi^{\perp}$}. {We refer the \cite[Section 3]{VS} for more examples of cocanceling operators.}  
The following estimate for vector fields belonging to the kernel of some cocanceling operator was presented {in} \cite{DNP}:

\begin{lemma}[{\cite[Lemma 3.1]{DNP}}]\label{mainlemma}
Let $L(D)$ be a {cocanceling} homogeneous linear differential operator of order m on $\R^{N}$ from $F$ to $V$. {Then} 
there exists $C>0$ %and $m \in \Z^{*}_{+}$ 
such that for every $\varphi \in C_{c}^{m}(\R^N,F)$,  
we have 
\begin{equation*}
\left| \int_{\R^N} \varphi(y)\cdot f(y)\,dy\right|\le 
C\sum_{j=1}^m \int_{\R^N}|f(y)|\,|y|^j\, |D^j\varphi (y)|\,dy.
\end{equation*}
for all functions $f \in L^{1}(\R^N,F)$ satisfying $L(D)f=0$  in the sense of distributions.
\end{lemma}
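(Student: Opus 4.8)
The plan is to deduce from the cocanceling hypothesis a \emph{reconstruction identity} expressing $f$ as a sum of derivatives of polynomial multiples of itself, and then to integrate it against $\varphi$; the weights $|y|^{j}$ will appear because each derivative will carry order exactly equal to the degree of the monomial it acts on. Write $L(D)=\sum_{|\alpha|=m}b_{\alpha}\partial^{\alpha}$ with $b_{\alpha}\in\L(F,V)$, so that $L(\xi)e=\sum_{|\alpha|=m}(b_{\alpha}e)\,\xi^{\alpha}$. Since distinct monomials are linearly independent, $\bigcap_{\xi\neq0}\ker L(\xi)=\bigcap_{|\alpha|=m}\ker b_{\alpha}$, and cocanceling makes this subspace trivial; hence $e\mapsto(b_{\alpha}e)_{|\alpha|=m}$ is injective and has a left inverse, i.e.\ there are linear maps $c_{\alpha}\colon V\to F$ with $\sum_{|\alpha|=m}c_{\alpha}b_{\alpha}=\mathrm{Id}_{F}$.

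Since $f\in L^{1}$, its Fourier transform is continuous and $L(\xi)\widehat f(\xi)=0$ holds as tempered distributions (it is $\widehat{L(D)f}=0$). Differentiating this $m$ times and applying the Leibniz rule --- legitimate because $L$ is a polynomial --- gives, for every $\beta$ with $|\beta|=m$,
\[
\beta!\,b_{\beta}\,\widehat f=-\sum_{0\le\gamma<\beta}\binom{\beta}{\gamma}\,(\partial^{\gamma}_{\xi}L)(\xi)\,\partial^{\beta-\gamma}_{\xi}\widehat f\qquad\text{in }\mathcal S',
\]
because $\partial^{\beta}_{\xi}L(\xi)=\beta!\,b_{\beta}$ is constant. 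Here $\partial^{\gamma}_{\xi}L$ is $\xi$-homogeneous of degree $m-|\gamma|$, and $\partial^{\beta-\gamma}_{\xi}\widehat f$ equals, up to a unimodular constant, $\widehat{y^{\beta-\gamma}f}$, with $y^{\beta-\gamma}f$ again tempered. Summing $c_{\beta}/\beta!$ times the $\beta$-relation over $|\beta|=m$ collapses the left side to $\widehat f$ (by the choice of the $c_{\beta}$), and taking inverse Fourier transforms yields
\[
f=\sum_{k=1}^{m}\sum_{|\alpha|=k}\partial^{\alpha}\Bigl(\sum_{|\delta|=k}C_{\alpha,\delta}\,y^{\delta}f\Bigr)\qquad\text{in }\mathcal S'(\R^{N},F),
\]
for suitable constant linear maps $C_{\alpha,\delta}\colon F\to F$; the key bookkeeping point is that in every term the order $|\alpha|=k$ of the derivative equals the degree $|\delta|=k$ of the monomial, which follows from $|\beta-\gamma|=m-|\gamma|$ together with $\deg_{\xi}(\partial^{\gamma}_{\xi}L)=m-|\gamma|$. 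Each $\partial^{\alpha}(y^{\delta}f)$ is a distribution of order $\le m$ since $y^{\delta}f\in L^{1}_{loc}$.

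To finish, pair the identity against $\varphi\in C_{c}^{m}(\R^{N},F)$: both sides are distributions of order $\le m$, hence act continuously on $C_{c}^{m}$, so the identity (valid on $C_{c}^{\infty}$) extends to $\varphi$, and integrating by parts --- legitimate because $y^{\delta}\,\partial^{\alpha}\varphi$ is bounded with compact support while $f\in L^{1}$ --- gives
\[
\int_{\R^{N}}\varphi(y)\cdot f(y)\,dy=\sum_{k=1}^{m}(-1)^{k}\sum_{|\alpha|=k}\sum_{|\delta|=k}\int_{\R^{N}}y^{\delta}f(y)\cdot\bigl(C_{\alpha,\delta}^{*}\,\partial^{\alpha}\varphi(y)\bigr)\,dy.
\]
Therefore $\bigl|\int_{\R^{N}}\varphi(y)\cdot f(y)\,dy\bigr|\le C\sum_{j=1}^{m}\int_{\R^{N}}|y|^{j}\,|f(y)|\,|D^{j}\varphi(y)|\,dy$, using $|y^{\delta}|\le|y|^{|\delta|}$, the boundedness of the finitely many $C_{\alpha,\delta}$, and $\sum_{|\alpha|=j}|\partial^{\alpha}\varphi|\lesssim|D^{j}\varphi|$. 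The main obstacle is the middle step: carrying out the iterated differentiation and choosing the $c_{\beta}$ so that the reconstruction of $f$ comes out with derivative order matched \emph{exactly} to monomial degree --- this is precisely what makes the weight $|y|^{j}$ appear against $D^{j}\varphi$ rather than a larger power of $|y|$. The remaining ingredients --- the linear-algebra consequence of cocanceling, the density/continuity argument, the integration by parts, and the triangle inequality --- are routine.
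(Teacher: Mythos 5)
Your proof is correct, and it is essentially the argument that underlies the cited source: note that the paper itself does not prove this lemma but imports it from \cite{DNP}, so there is no in-paper proof to compare against. The two points one has to check in your write-up do hold: (i) cocancellation is indeed equivalent to $\bigcap_{|\alpha|=m}\ker b_\alpha=\{0\}$, because a $V$-valued polynomial vanishing on $\R^N\setminus\{0\}$ vanishes identically, so the left inverse $\sum_{|\alpha|=m}c_\alpha b_\alpha=\mathrm{Id}_F$ exists; and (ii) the Leibniz computation on $L(\xi)\widehat f(\xi)=0$ in $\mathcal{S}'$, resummed with the $c_\beta/\beta!$, legitimately produces the representation $f=\sum_{k=1}^{m}\sum_{|\alpha|=|\delta|=k}\partial^\alpha\bigl(C_{\alpha,\delta}\,y^\delta f\bigr)$ with derivative order exactly matching monomial degree (each term $(\partial^\gamma_\xi L)\,\partial^{\beta-\gamma}_\xi\widehat f$ pairs a polynomial of degree $m-|\gamma|$ with $|\beta-\gamma|=m-|\gamma|$), which for $L(D)=\diver$ reduces to the classical identity $f_j=\diver(y_j f)$. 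The remaining steps — $y^\delta f\in\mathcal{S}'$, each term being a distribution of order at most $m$, density of $C^\infty_c$ in $C^m_c$ to pass to $\varphi\in C^m_c(\R^N,F)$, and the final triangle inequality with $|y^\delta|\leq|y|^{|\delta|}$ — are routine and correctly handled, with a constant depending only on $L$, $m$ and $N$.
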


We also state {the following} self-improvement of a special Hardy-type {inequality,} first proved in \cite[Lemma 2.1]{DNP}.

\begin{lemma}\label{weight1} 
Let $1 \leq q <\infty$ and {$\nu \in \M_+(\R^N)$ be a $\sigma$-finite measure.} Suppose $u$ and $v$ are measurable, {non-negative} and finite almost everywhere functions. Then 
\begin{equation*}
\left[ \int_{\R^{N}} \left(\int_{B(0,|x|/2)}g(y)dy \right)^{q}{u}(x)d\nu(x) \right]^{1/q} \leq C \int_{\R^N} g(x){v}(x)dx
\end{equation*}
holds {for all $g \geq 0$}  if 
\begin{equation*}
\left( \int_{B^c(0,2|y|)} u(x) \, d\nu(x) \right)^{1/q} [v(y)]^{-1}\leq C, \quad a.e. \,\,  y \in \R^{n}.
\end{equation*}
\end{lemma}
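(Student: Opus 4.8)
The plan is to prove Lemma \ref{weight1} by a duality/Schur-test type argument combined with a dyadic decomposition in the variable $|x|$, reducing the claimed weighted inequality to the pointwise testing hypothesis on annuli. Before that, I observe that the stated inequality is equivalent (after writing $g = $ density and testing against the extremal) to the boundedness of the linear operator
\[
(Tg)(x) := \int_{B(0,|x|/2)} g(y)\,dy
\]
from $L^1(\R^N, v\,dx)$ (in the sense $\int g v \, dx$ on the right) to $L^q(\R^N, u\,d\nu)$. Since $T$ is positivity-preserving and its kernel $K(x,y) = \mathbf 1_{\{|y| < |x|/2\}}$ depends only on the moduli $|x|, |y|$, the natural approach is a one-dimensional reduction: set $h(t) = \int_{|y| = t} g\,d\sigma(y)$ (surface measure), so that $\int_{B(0,|x|/2)} g\,dy = \int_0^{|x|/2} h(t)\,dt$, and the right-hand side becomes $\int_0^\infty h(t)\,\big(\fint_{|y|=t} v\,d\sigma\big)\,t^{\,\cdot}\cdots$ — but since $v$ need not be radial I will instead keep $x$ in $\R^N$ and only exploit that the inner integral is radial in $x$.

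The key steps, in order: (1) Fix a density $g \ge 0$ with $\int g v\,dx < \infty$ (else nothing to prove). Decompose $\R^N = \bigsqcup_{k \in \Z} A_k$ with $A_k = \{x : 2^{k} \le |x| < 2^{k+1}\}$. For $x \in A_k$ we have $B(0,|x|/2) \subseteq B(0,2^k)$, hence
\[
(Tg)(x) \le \int_{B(0,2^k)} g(y)\,dy = \sum_{j \le k-1} \int_{A_j} g(y)\,dy =: \sum_{j \le k-1} a_j .
\]
(2) Therefore
\[
\int_{\R^N} (Tg)(x)^q\, u(x)\,d\nu(x) \le \sum_{k} \Big(\sum_{j \le k-1} a_j\Big)^{q} \int_{A_k} u\,d\nu .
\]
(3) Apply the testing hypothesis: for $y \in A_{j}$ we have $B^c(0,2|y|) \supseteq B^c(0, 2^{j+2}) \supseteq \bigcup_{k \ge j+2} A_k$, so
\[
\sum_{k \ge j+2} \int_{A_k} u\,d\nu \le \int_{B^c(0,2|y|)} u\,d\nu \le C^q\, v(y)^q \quad \text{for a.e. } y \in A_j,
\]
which (taking an essential infimum, or rather using it for a.e. $y$) controls the tail sum of the $\nu$-masses of the annuli. (4) Combine: this is now a purely discrete Hardy inequality of the form $\sum_k \big(\sum_{j<k} a_j\big)^q b_k \le C \big(\sum_j a_j^{1/\cdots}\big)$ — more precisely, I expect to need the discrete weighted Hardy inequality whose testing condition is exactly $\sum_{k > j} b_k \lesssim$ (something comparable to) $v$ on $A_j$, and to close it I will use the elementary inequality $\big(\sum_{j<k} a_j\big)^q \le \big(\sum_{j<k} a_j\big)^{q-1}\sum_{j<k}a_j$ together with summation by parts / Fubini in the $(j,k)$ sum, reducing everything to $\sum_j a_j \cdot \big(\sum_{k>j} b_k\big) \cdot (\text{prefactor})$ and then bounding $a_j \le \int_{A_j} g\,dx$ against $\int_{A_j} g\,v\,dx$ via the testing bound $\big(\sum_{k>j}b_k\big)^{1/q} \le C\, v(y)$ valid a.e. on $A_j$.

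The main obstacle is step (4): passing from the a.e.-pointwise testing bound $\big(\int_{B^c(0,2|y|)} u\,d\nu\big)^{1/q} \le C v(y)$ to a usable bound \emph{uniform over the annulus} $A_j$ without assuming $v$ is radial or doubling. The resolution is that one does not need uniformity: in the final double sum one keeps the integral $\int_{A_j} g(y)\,v(y)\,dy$ intact and uses the testing bound \emph{under the integral sign}, i.e. $\big(\sum_{k \ge j+2} b_k\big)^{1/q} \le C\, v(y)$ for a.e. $y \in A_j$, so that $\big(\sum_{k\ge j+2} b_k\big)^{1/q}\int_{A_j} g\,dy$ — wait, this needs the factor outside — is instead handled by writing $a_j \,\big(\sum_{k} b_k\,(\cdots)\big) \le \int_{A_j} g(y)\big(\sum_{k\ge j+2}b_k\big)(\cdots)\,dy$ and then invoking the pointwise bound inside. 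The case $q = 1$ is the cleanest (the summation-by-parts step is immediate and no Hölder is needed); for $q > 1$ one inserts Hölder with exponents $q$ and $q'$ on the discrete sum $\sum_{j<k} a_j = \sum_{j<k} a_j^{1/q'} \cdot a_j^{1/q}$ paired against a geometric weight, which is where the self-improving (i.e. no loss in the exponent) character of the inequality comes from and is precisely what was observed in \cite[Lemma 2.1]{DNP}. I would close by noting the constant $C$ in the conclusion depends only on $q$ and the testing constant, and that $\sigma$-finiteness of $\nu$ is used only to justify the interchange of sums and integrals via monotone convergence.
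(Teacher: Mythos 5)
Your reduction loses exactly one dyadic band, and that loss is fatal under the stated hypothesis. In step (1) you replace $B(0,|x|/2)$ by $B(0,2^k)$ for $x\in A_k$, i.e.\ you enlarge the kernel $\mathbf{1}_{\{2|y|<|x|\}}$ so as to admit all of $A_{k-1}$, including points $y$ with $2|y|\ge |x|$. After this enlargement, the discrete inequality you must prove in step (4) pairs $a_j$ with the tail $\bigl(\sum_{k\ge j+1}b_k\bigr)^{1/q}$ (the term $k=j+1$ occurs because $j\le k-1$), whereas your step (3) --- which is all the hypothesis yields --- only controls $\bigl(\sum_{k\ge j+2}b_k\bigr)^{1/q}\le C\,v(y)$ a.e.\ on $A_j$. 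The interaction between $A_j$ and $A_{j+1}$ is genuinely uncontrolled: take any $q$, $\nu=\delta_{x_0}$ with $|x_0|=2^{k_0}$, $u\equiv 1$, $v=\mathbf{1}_{B(0,2^{k_0-1})}+\epsilon$, and $g=\mathbf{1}_{A_{k_0-1}}$. The testing condition holds with constant $1$ uniformly in $\epsilon$ (the integral over $B^c(0,2|y|)$ equals $1$ for $|y|\le 2^{k_0-1}$, where $v\ge 1$, and equals $0$ for $|y|>2^{k_0-1}$), yet your majorant from step (2) is $\bigl(\sum_{j\le k_0-1}a_j\bigr)^q b_{k_0}=|A_{k_0-1}|^q>0$ while $\int g\,v\,dx=\epsilon\,|A_{k_0-1}|$ is arbitrarily small. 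So the discretized inequality you set out to prove is false, even though the lemma itself holds trivially for these data (here $\int_{B(0,|x_0|/2)}g\,dy=0$). Your proposed resolution (``use the testing bound under the integral sign'') only reaches the tails $k\ge j+2$ and cannot repair this.

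The proof the paper intends (it refers to \cite[Lemma 4.1]{BP}, in the spirit of \cite[Lemma 2.1]{DNP}) needs no discretization and no discrete Hardy inequality: because the right-hand side is an $L^1$-type norm and $q\ge 1$, write $\int_{B(0,|x|/2)}g(y)\,dy=\int_{\R^N}g(y)\,\mathbf{1}_{\{2|y|<|x|\}}\,dy$ and apply Minkowski's integral inequality in $L^q(u\,d\nu)$, so that the left-hand side is at most $\int_{\R^N}g(y)\bigl(\int_{\{|x|>2|y|\}}u(x)\,d\nu(x)\bigr)^{1/q}dy\le C\int_{\R^N} g(y)\,v(y)\,dy$ by the testing hypothesis, with $\sigma$-finiteness and Tonelli justifying the exchanges. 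Keeping the exact kernel is precisely what removes the off-by-one band; if you insist on the annular decomposition you must treat the near band $A_{k-1}\cap B(0,|x|/2)$ with the exact constraint $2|y|<|x|$, at which point you are redoing the Minkowski argument anyway.
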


The proof is exactly the same found at {\cite[Lemma 4.1]{BP}} and will be omitted.

\section{Proof of Theorem \ref{theoA}}\label{sect theoA}

The first part of Theorem \ref{theoA} follows from Proposition \ref{prop finite 1}. To prove the second part, for $1 < p < \infty$ and a weight $w$, let $w' \doteq w^{1/(1-p)}$ be the \emph{conjungate weight} of $w$ with respect to $p$. Let $W^{m,p'}_{A,w'}(\R^N,E)$ be the closure of $\ccinf(\R^N,E)$ with respect to the norm $\|\varphi\|_{m,p',w'} \doteq \|A(D)\varphi\|_{L_{w'}^{p'}}$, where $p'=p/(p-1)$ denotes the conjugate exponent of $p$.  %\footnote{\textcolor{olive}{E para $ p=1 $? Tal vez: And $ w^\prime = 1/w$ ?}}. 
{We claim that} there exists $C>0$ such that 
\begin{equation}\label{maineq p}
    \left|\int_{\R^N} \varphi(x) \, d\mu(x) \right|  \leq C \|A(D)\varphi\|_{L_{w'}^{p'}}, \quad  {\text{for all}} \,\, \varphi \in \ccinf(\R^N,E).
\end{equation}
{Assuming the validity of the previous inequality}, it implies $\mu \in [W^{m,p'}_{A,w'}(\R^N,E)]^*$. Since $A(D): W^{m,p'}_{A,w'}(\R^N,E) \to L_{w'}^{p'}(\R^N,F)$ is a linear isometry, its adjoint $A^*(D): L_{w}^{p}(\R^N,F) \to [W^{m,p'}_{A,w'}(\R^n,E)]^*$ is surjective. Therefore, there exists $f \in L_{w}^{p}(\R^N,F)$ such that $A^*(D)f = \mu$. 
Hence, it is sufficient to prove \eqref{maineq p} as we do in the following:

\begin{proposition}\label{prop finite 2}
    Let $A(D)$ be an elliptic homogeneous linear differential operator of order $1 \leq m<N$ on $\R^{N}$, $N\ge2$, from $E$ to $F$, $\mu \in \M(\R^N,E)$ and {$w\in L^{1}_{loc}(\R^N)$} be a weight in $\R^N$. If $|\mu|$ has finite $(m,p,w)-$energy for {$1<p<\infty$}, then there exists a positive constant $C$ such that \eqref{maineq p} holds.
\end{proposition}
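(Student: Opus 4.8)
The plan is to establish the \emph{a priori} estimate \eqref{maineq p} by a duality-and-potential argument paralleling \cite[Lemma 4.3]{BP}, but now keeping track of the weight. First I would reduce to a pointwise representation: for $\varphi \in \ccinf(\R^N,E)$ one writes $\varphi$ in terms of $A(D)\varphi$ via the ellipticity of $A(D)$. Concretely, since $A(D)$ is elliptic there is a homogeneous Fourier multiplier (a parametrix) of order $-m$, built from $(A(\xi)^*A(\xi))^{-1}A(\xi)^*$, so that $\varphi = T\,[A(D)\varphi]$ where $T$ is a convolution operator whose kernel is controlled by $|x-y|^{-(N-m)}$, i.e. pointwise $|\varphi(x)| \lesssim I_m(|A(D)\varphi|)(x)$. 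Then
\[
\left|\int_{\R^N}\varphi\,d\mu\right| \le \int_{\R^N} |\varphi(x)|\,d|\mu|(x) \lesssim \int_{\R^N} I_m(|A(D)\varphi|)(x)\,d|\mu|(x).
\]
By Tonelli's theorem the right-hand side equals $\int_{\R^N} |A(D)\varphi(y)|\, I_m|\mu|(y)\,dy$ (up to the constant $\gamma(m)^{-1}$), so it remains to bound $\int_{\R^N} |A(D)\varphi(y)|\,I_m|\mu|(y)\,dy$ by $C\|A(D)\varphi\|_{L^{p'}_{w'}}$.

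The second step is a weighted Hölder inequality: writing the integrand as $\big(|A(D)\varphi(y)|\,w'(y)^{1/p'}\big)\cdot\big(I_m|\mu|(y)\,w'(y)^{-1/p'}\big)$ and applying Hölder with exponents $p'$ and $p$ gives
\[
\int_{\R^N} |A(D)\varphi(y)|\,I_m|\mu|(y)\,dy \le \|A(D)\varphi\|_{L^{p'}_{w'}}\left(\int_{\R^N} \big(I_m|\mu|(y)\big)^p w'(y)^{-p/p'}\,dy\right)^{1/p}.
\]
Now the key algebraic point is that $w' = w^{1/(1-p)}$ satisfies $w'^{-p/p'} = w'^{-(p-1)} = w$, so the last factor is exactly $\|I_m|\mu|\,\|_{L^p_w}$, which is finite precisely because $|\mu|$ has finite $(m,p,w)$-energy. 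Taking $C = \gamma(m)^{-1}\,\||\mu|\|\text{-energy constant}$ (absorbing the parametrix constant) yields \eqref{maineq p}, and hence the proposition.

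The step I expect to require the most care is the first one: making rigorous the pointwise domination $|\varphi(x)| \lesssim I_m(|A(D)\varphi|)(x)$ for $\varphi \in \ccinf(\R^N,E)$. This is where ellipticity is essential (it is false for general $A(D)$), and one should either invoke the standard parametrix construction for elliptic operators — the fundamental solution of $A^*(D)A(D)$ composed with $A^*(D)$ has a kernel homogeneous of degree $m-N$ with the right size bound — or cite the corresponding estimate already used in \cite{BP} (it is the weighted analogue of \cite[Lemma 4.3]{BP}, and the identity \eqref{eq:ImRalpha} is morally the same fact in Fourier-multiplier form). Once that representation is in hand, the remaining steps are the routine Tonelli/Hölder manipulations above, and the conjugate-weight identity $w'^{-(p-1)}=w$ is what closes the argument. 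Note $N\ge 2$ and $1\le m<N$ guarantee $0<m<N$ so that $I_m$ and its kernel $|x-y|^{-(N-m)}$ are well defined with a locally integrable singularity.
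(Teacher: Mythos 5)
Your proposal is correct and follows essentially the same route as the paper: the ellipticity of $A(D)$ yields the convolution representation $\varphi = K \ast [A(D)\varphi]$ with $K$ homogeneous of degree $m-N$, hence $|\varphi(x)| \lesssim I_m|A(D)\varphi|(x)$, and then Tonelli followed by H\"older with the conjugate weight $w'=w^{1/(1-p)}$ gives $\left|\int \varphi\, d\mu\right| \lesssim \|A(D)\varphi\|_{L^{p'}_{w'}}\,\|I_m|\mu|\|_{L^p_w}$, which is exactly the paper's argument. Your explicit verification of the identity $w'^{-(p-1)}=w$ simply spells out the weighted H\"older step that the paper leaves implicit.
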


\begin{proof}
    Since $A(D)$ is elliptic, consider 
 the function $\xi\mapsto H(\xi)\in \mathcal{L}(F,E)$ defined by
$H(\xi)=(A^{\ast}\circ A)^{-1}(\xi)A^{\ast}(\xi)$
that is smooth in $\R^{N}\backslash \left\{0\right\} $ and homogeneous of 
degree $-m$, where $A^{\ast}(\xi)$ is the symbol of the adjoint operator $A^{\ast}(D)$.
Assuming $1 \leq m<N$, we have that $H$ is a locally integrable tempered {distribution, and} its inverse Fourier transform $K(x)$
is a locally integrable tempered distribution homogeneous of degree $-N+m$ (see \cite[p.~71]{Javier}). That implies the identity 
\begin{equation}\label{decfrac}
\varphi(x)=\int_{\R^{N}}K(x-y)[A(D)\varphi(y)]\,dy, \quad {\text{for all}} \,\,\varphi \in C_{c}^{\infty}(\R^{N},E),
\end{equation}
and $|\varphi(x)| \lesssim I_m|A(D)\varphi|(x) = C \ds\int_{\R^N}\dfrac{|A(D)\varphi(y)|}{|x-y|^{N-m}}\,dy$. Hence, {applying H\"older's inequality we have}
	\begin{align*}
		\left|\int_{\R^N} \varphi(x) \, d\mu(x) \right| &\lesssim \int_{\R^N} \left[\int_{\R^N} \dfrac{|A(D)\varphi(y)|}{|x-y|^{N-m}} \, dy\right] d|\mu|(x)\\
		&\lesssim  \int_{\R^N} |A(D)\varphi(y)| \, I_m |\mu|(y) \, dy\\
		& \leq \|A(D)\varphi\|_{L^{p'}_{w'}} \, \|I_m |\mu|\|_{L^p_w}\\
        &\lesssim \|A(D)\varphi\|_{L^{p'}_{w'}},
	\end{align*}
	since $|\mu|$ has finite $(m,p,w)-$energy, {and the inequality \eqref{maineq p} follows as desired.}
\end{proof}

\section{Proof of Theorem \ref{theoB}}\label{sect theoB}

In order to prove Theorem \ref{theoB}, it is sufficient to exhibit $C>0$ such that 
	\begin{equation}\label{maineq}
		\left|\int_{\R^N} \varphi(x) \, d\mu(x) \right|  \leq C \|A(D)\varphi\|_{L_{w}^{1}}, \quad  {\text{for all}} \,\, \varphi \in \ccinf(\R^N,E),
	\end{equation}
which implies $\mu \in [W^{m,1}_{A,w}(\R^N,E)]^*$. %Since $A(D): W^{m,1}_{A,w}(\R^N,E) \to L_{w}^{1}(\R^N,F)$ is a linear isometry, its adjoint
{Following the same steps from the argument of the previous proposition, we conclude} $A^*(D): L_{1/w}^{\infty}(\R^N,F) \to [W^{m,1}_{A,w}(\R^n,E)]^*$ is surjective  ({note that, for $p=1$, the conjugate weight of $w$ is given by $w':=1/w$).} Therefore, there exists a solution $f \in L_{1/w}^{\infty}(\R^N,F)$ for $A^*(D)f = \mu$.

Now, we move on to prove the inequality \eqref{maineq}.
Since $A(D)$ is elliptic, 
{we have identity \eqref{decfrac}} with
\begin{equation} \label{ka} 
 |K(x-y)| \leq C \; |x - y|^{m-N} , \quad \,  {x} \neq y,
\end{equation}
and 
\begin{equation} \label{kb0} 
 |\partial_y K(x-y)| \leq C \; |x - y|^{m-N-1} , \quad \,  2|y| \leq |x|.
\end{equation}
Applying the identity \eqref{decfrac} into \eqref{maineq}, it is sufficient to obtain  
\begin{equation}\label{eqmain}
	\int_{\R^N}  \left|\int_{\R^N}K(x-y)g(y) \, dy\right| \, d|\mu|(x)  \lesssim \int_{\R^N}|g(x)|w(x)dx
\end{equation}
for all $g:=A(D)\varphi$, with $\varphi \in \ccinf(\R^N,E)$. Integral estimates like \eqref{eqmain} in the general setting % for $\nu \in \mathcal{M}_{+}(\R^{N})$ %of this type for $1\leq q<\infty $ in a general setting for $\nu \in \mathcal{M}_{+}(\R^{N})$ 
%Let us return to the classical Stein-Weiss inequality with power weights \eqref{ineq1.1} which can be written as 
\begin{equation*}
\left( \int_{\R^N} \left| \int_{\R^N} K(x,y)g(y) \; dy \right|^q d\nu(x)  \right)^{1/q} 
\leq C  \int_{\R^N} |g(x)| w(x) \; dx
\end{equation*}
were studied by the third author \emph{et al.} in \cite{DNP,HHP} for $d\nu:=|x|^{-\beta q} \;dx$ and $w(x)=|x|^{\alpha}$ (satisfying a special relation between $\alpha,\beta$ and $q$), and by the first and third authors in \cite{BP} for general measures $\nu \in \mathcal{M}_{+}(\R^{N})$ satisfying \eqref{ahlfors BP1} and \eqref{wolff BP1}, where $g$ belongs to the kernel of some cocanceling operator. Next we present a  self-improvement of \cite[Lemma 4.3]{BP} that is a  Stein-Weiss type inequality in $ L^1_w$-norm for this special class of vector fields:

\begin{lemma}\label{main}
Assume $N \geq 2$, $0 < \ell < N$ and $K(x,y) \in L^{1}_{loc}(\R^N \times \R^N, \mathcal{L}(F,V))$ 
satisfying
\begin{equation} \label{kc} 
	|K(x, y)| \leq {C_1} |x - y|^{\ell-N} , \quad x \neq y ,
\end{equation}
and 
\begin{equation} \label{kd}
	|K(x,y)-K(x,0)| \leq {C_2} \frac{|y|}{|x|^{N-\ell+1}} , \quad 2|y|\leq |x|.
\end{equation}
Suppose $1 \leq q < \infty$, and let $w$ be a weight and $\nu \in \mathcal{M}_{+}(\R^{N})$ satisfy the testing conditions
\begin{equation} \label{12a}
	{\left(\int_{B^{c}(0,2|y|)} \dfrac{1}{|x|^{(N-\ell+1)q}} \, d\nu(x)\right)^{1/q} \leq C_{3} \, \frac{w(y)}{|y|}}, \quad \quad a.e. \,\, y \in \R^{N},
\end{equation}
{and}
\begin{equation} \label{13a}
	\left(\int_{B(0,4|y|)} \dfrac{1}{|x-y|^{(N-\ell)q}} \, d\nu(x)\right)^{1/q} \leq C_{4}\, w(y),
	\quad \quad a.e. \,\,y \in \R^{N}.
\end{equation}
If $L(D)$ is cocanceling, then there exists $C>0$ such that 
\begin{equation*}\label{mainineq}
	\left( \int_{\R^N} \left| \int_{\R^N} K(x,y)g(y) \; dy \right|^q d\nu(x)  \right)^{1/q} 
\leq C \int_{\R^{N}} |g(x)|w(x) \, dx, 
\end{equation*}
for all $g \in {L^1_w}(\R^N,F)$ satisfying $L(D)g=0$  %such that  %$f \in L^{1}(\R^N,F)$ such that 
%$L(D)f=0$ 
in the sense of distributions. %Conversely, if for all $f \in C^\infty_c(\R^N,F)$ satisfying $L(D)f=0$ the inequality \eqref{main5} holds then $L(D)$ has to be cocanceling. 
\end{lemma}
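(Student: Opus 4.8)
The plan is to split the domain of the outer integral $\int_{\R^N}(\cdots)\,d\nu(x)$ into two regions adapted to the scaling of $K$ and the two testing conditions: a \emph{near region} where the variable $x$ is comparable to or inside the support region of $g$, and a \emph{far region} $|x|$ large compared to $|y|$. Concretely, for a fixed $x$ one writes the inner integral $\int_{\R^N}K(x,y)g(y)\,dy$ as $\int_{\{|y|\le |x|/2\}}+\int_{\{|y|> |x|/2\}}$. On the piece $\{|y|>|x|/2\}$ we only use the size bound \eqref{kc}: there $|x-y|$ is still of a controllable size relative to $|y|$, and after crudely estimating $|K(x,y)|\lesssim|x-y|^{\ell-N}$ we are led to bound $\int_{\R^N}\bigl(\int_{\{|y|>|x|/2\}}|x-y|^{\ell-N}|g(y)|\,dy\bigr)^q d\nu(x)$; by Minkowski's integral inequality this reduces to controlling, for each fixed $y$, the quantity $|g(y)|\bigl(\int_{\{|x|<4|y|\}}|x-y|^{(\ell-N)q}\,d\nu(x)\bigr)^{1/q}$, which is exactly what testing condition \eqref{13a} is designed to dominate by $C_4\,|g(y)|w(y)$. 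Integrating in $y$ gives the desired right-hand side for this piece.

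The heart of the argument is the near piece $\int_{\{|y|\le|x|/2\}}K(x,y)g(y)\,dy$, and this is where the \emph{cocanceling} hypothesis and Lemma \ref{mainlemma} enter. The idea is to subtract the ``frozen'' kernel $K(x,0)$: since $g$ has $L(D)g=0$ in the distributional sense (and $g=A(D)\varphi$ is smooth and compactly supported, hence in $L^1$), and $K(x,0)$ does not depend on $y$, the constant ``tail'' contributes nothing against $g$ in the sense of Lemma \ref{mainlemma} — more precisely one applies Lemma \ref{mainlemma} with the test function $y\mapsto \chi(y)[K(x,y)-K(x,0)]$ or, cleaner, one first notes $\int K(x,0)g(y)\,dy$ pairs a constant against $g$ and vanishes, then estimates $\int_{\{|y|\le|x|/2\}}[K(x,y)-K(x,0)]g(y)\,dy$ directly via \eqref{kd}: $|K(x,y)-K(x,0)|\lesssim |y|\,|x|^{\ell-N-1}$ on $\{2|y|\le|x|\}$. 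This bounds the near piece by $|x|^{\ell-N-1}\int_{\{|y|\le|x|/2\}}|y|\,|g(y)|\,dy$. Raising to the $q$-th power and integrating $d\nu(x)$, we must control $\int_{\R^N}|x|^{(\ell-N-1)q}\bigl(\int_{B(0,|x|/2)}|y||g(y)|\,dy\bigr)^q d\nu(x)$; this is precisely the situation of Lemma \ref{weight1} with $u(x)=|x|^{(\ell-N-1)q}$, $v(y)=|y|^{-1}w(y)$ (so that $g(y)$ there is $|y||g(y)|$ and $g(y)v(y)=|g(y)|w(y)$), and the hypothesis of that lemma, $\bigl(\int_{B^c(0,2|y|)}|x|^{(\ell-N-1)q}\,d\nu(x)\bigr)^{1/q}\le C\,|y|^{-1}w(y)$, is exactly testing condition \eqref{12a}. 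Lemma \ref{weight1} then yields the bound $\lesssim \int_{\R^N}|g(x)|w(x)\,dx$ for the near piece.

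Putting the two pieces together gives $\bigl(\int_{\R^N}|\int_{\R^N}K(x,y)g(y)\,dy|^q d\nu(x)\bigr)^{1/q}\lesssim \int_{\R^N}|g(x)|w(x)\,dx$, which is the claim. A couple of technical points need care: one should verify that for $\varphi\in\ccinf$ the function $g=A(D)\varphi$ genuinely lies in $L^1_w$ and that all the integrals above are absolutely convergent so that Fubini/Minkowski and the splitting are legitimate (this uses local integrability of $K$ and $\sigma$-finiteness of $\nu$, plus compact support of $g$); and in applying Lemma \ref{mainlemma} one must justify the subtraction of the constant tail when $\varphi$ — equivalently $g$ — is not supported in $\{|y|\le|x|/2\}$, which is why it is cleaner to split $g$ by the region of integration first and then argue that, since $\nu$ is only integrated where $|x|\ge 2|y|$ for the relevant $y$'s, the pointwise bound \eqref{kd} already suffices without re-invoking cocancellation on the near piece beyond the single use that kills $\int K(x,0)g$.

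\textbf{Main obstacle.} The delicate point is the near-region estimate: one must organize the splitting of the inner integral so that the cancellation encoded by $L(D)g=0$ is used exactly once (to discard the frozen kernel $K(x,0)$) and then the quantitative decay \eqref{kd} takes over, all while keeping the Hardy-type weight $u(x)=|x|^{(\ell-N-1)q}$ matched precisely to the left side of \eqref{12a} so that Lemma \ref{weight1} applies verbatim. Getting the bookkeeping of the two regions and the two testing conditions to line up with Lemmas \ref{mainlemma} and \ref{weight1} — rather than any single hard estimate — is the crux.
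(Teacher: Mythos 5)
Your bookkeeping for two of the three contributions matches the paper: the difference $K(x,y)-K(x,0)$ on $\{2|y|\le|x|\}$ is handled via \eqref{kd}, Lemma \ref{weight1} with $u(x)=|x|^{(\ell-N-1)q}$, $v(y)=w(y)/|y|$ and \eqref{12a}, and the region $\{|y|>|x|/2\}$ is handled via \eqref{kc}, Minkowski and \eqref{13a}, exactly as in the paper's treatment of its $K_2$. The gap is in how you dispose of the frozen kernel $K(x,0)$. Your claim that ``$\int K(x,0)g(y)\,dy$ pairs a constant against $g$ and vanishes'' is only true for the integral over all of $\R^N$ (and even then it needs $g\in L^1$, not just $L^1_w$); in your decomposition the frozen kernel is paired with $g$ restricted to $B(0,|x|/2)$, and $K(x,0)\int_{B(0,|x|/2)}g(y)\,dy$ does not vanish. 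If you instead subtract $K(x,0)$ globally (using $\int_{\R^N}g=0$) and then split, the leftover is $|K(x,0)|\,\bigl|\int_{\{|y|>|x|/2\}}g\bigr|$, and bounding its contribution requires, after Minkowski, an estimate of the form $\bigl(\int_{\{|x|<2|y|\}}|x|^{(\ell-N)q}\,d\nu(x)\bigr)^{1/q}\lesssim w(y)$. This is \emph{not} implied by \eqref{12a}--\eqref{13a}: take $\nu=\delta_0$ and $w(y)=|y|^{\ell-N}$, which satisfy both testing conditions (the origin never lies in $B^c(0,2|y|)$, and $\delta_0$ contributes exactly $|y|^{(\ell-N)q}=w(y)^q$ to \eqref{13a}), yet $\int_{\{|x|<2|y|\}}|x|^{(\ell-N)q}\,d\delta_0=\infty$, while the conclusion of the lemma still holds for this pair by \eqref{kc} alone. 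So the leftover term cannot be absorbed by your route, and your fallback suggestion of applying Lemma \ref{mainlemma} to $y\mapsto\chi(y)[K(x,y)-K(x,0)]$ is also not available: $\chi$ is a sharp cutoff (not $C^m_c$) and the hypotheses give no $y$-derivative bounds on $K$, only the difference estimate \eqref{kd}.

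The missing idea is the paper's: keep the frozen kernel but multiply it by a rescaled smooth cutoff, $K_1(x,y)=\psi(y/|x|)K(x,0)$ with $\psi\in C^\infty_c(B(0,1/2))$, $\psi\equiv1$ on $B(0,1/4)$, and apply Lemma \ref{mainlemma} for each fixed $x$ to the test function $y\mapsto\psi(y/|x|)$. Cocancellation then yields $\bigl|\int\psi(y/|x|)g(y)\,dy\bigr|\lesssim\int_{B(0,|x|/2)}\frac{|y|}{|x|}|g(y)|\,dy$, and it is precisely this gained factor $|y|/|x|$ that converts the trivial weight $|x|^{(\ell-N)q}$ into $|x|^{(\ell-N-1)q}$ and the target weight $w(y)$ into $w(y)/|y|$, i.e.\ exactly the quantities appearing in \eqref{12a} and in Lemma \ref{weight1}. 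In other words, cancellation must be used \emph{quantitatively} on the frozen-kernel piece (producing decay), not qualitatively (producing zero); with your sharp splitting the cancellation gain is lost and the proof does not close under the stated hypotheses.
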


In order to prove inequality \eqref{eqmain}, and consequently Theorem \ref{theoB}, we estimate 
\begin{equation}\nonumber
	\left|\int_{\R^N}  \left[\int_{\R^N}K(x-y)g(y) \, dy\right] \, d\mu(x) \right| \leq
	\int_{\R^N}  \left|\int_{\R^N}K(x-y)g(y) \, dy \right| \, d|\mu|(x) 
\end{equation}
and apply the Lemma \ref{main} for {$q=1$, $\ell=m$ and $\nu=|\mu|$,} taking $K(x,y)=K(x-y)$ given by identity \eqref{decfrac} that satisfies  \eqref{ka} {and \eqref{kb0}}. Clearly \eqref{kb0} implies   \eqref{kd} from the mean value inequality. % and the fact that for $ 2|y|\leq |x| $ and $ \theta_0 \in [0,1] $, we have $ |x|\leq 2|x-\theta_0 y| $.} 
The conclusion follows by taking $g:=A(D)u$, {which} belongs to the kernel of some cocanceling operator, since $A(D)$ is {elliptic and} canceling.

\begin{proof}[Proof of Lemma \ref{main}]
Let $\psi \in C_c^\infty(B(0,1/2))$ be a cut-off function such that $0 \leq \psi \leq 1$, $\psi \equiv 1$ on $B(0,1/4)$, and write $K(x,y) = K_1(x,y)+K_2(x,y)$ with $K_1(x,y) = \psi(y/|x|)K(x,0)$. It is enough to show {that for $ j=1,2 $}, we have
	\begin{equation*}\label{Ji1}
		J_j \doteq \left(\int_{\R^N} \left| \int_{\R^N} K_j(x,y) f(y) \, dy \right|^q \, d\nu(x) \right)^{1/q} \leq C \int_{\R^N} |f(x)| w(x) \, dx.
	\end{equation*}
Using the control \eqref{kc} and Lemma \ref{mainlemma}, we may estimate
	\begin{align*}
		J_1 &= \left(\int_{\R^N} \left| \int_{\R^N} \psi\left(\dfrac{y}{|x|}\right) f(y) \, dy \right|^q |K(x,0)|^q \,  d\nu(x) \right)^{1/q}\\
		&\lesssim \left(\int_{\R^N} \left| \int_{\R^N} \psi\left(\dfrac{y}{|x|}\right) f(y) \, dy \right|^q |x|^{(\ell - N)q} \,  d\nu(x) \right)^{1/q} \\
		&\lesssim \left(\int_{\R^N} \left( \int_{B(0,|x|/2)} \dfrac{|y|}{|x|} |f(y)| \, dy \right)^q |x|^{(\ell - N)q} \,  d\nu(x) \right)^{1/q} \\
		&= \left(\int_{\R^N} \left( \int_{B(0,|x|/2)} |y| |f(y)| \, dy \right)^q |x|^{(\ell - N-1)q} \,  d\nu(x) \right)^{1/q}.
	\end{align*}
{From assumption \eqref{12a}, we can apply the Hardy type inequality given by Lemma \ref{weight1} with $u(x):=|x|^{(\ell - N-1)q}$, $ v(y):={w(y)}|y|^{-1}$ and $ g(y):=|y||f(y)| $, yielding}
	\begin{align*}
		J_1 & \lesssim \left(\int_{\R^N} \left( \int_{B(0,|x|/2)} |y| |f(y)| \, dy \right)^q |x|^{(\ell -N-1)q}  \, d\nu(x) \right)^{1/q}  \lesssim  \int_{\R^N} |f(x)| w(x) \, dx. 
	\end{align*}	
		On the other hand, note that to estimate $ J_2 $ it is enough to show
	 \begin{equation*}\label{eq:3002}
	 	\left( \int_{\R^N} |K_2(x,y)|^q \, d\nu(x) \right)^{1/q} \leq C\, w(y), \quad \quad a.e. \, \, y \in \R^{n} ,	
	 \end{equation*}
	since, by using Minkowski's inequality, we obtain
	\begin{align*}
		J_2 &\leq \int_{\R^N} \left( \int_{\R^N} |K_2(x,y)|^q  \,{d\nu(x)} \right)^{1/q} |f(y)| \, dy \lesssim  \int_{\R^N} |f(y)| w(y) \, dy.
	\end{align*}
Writing 
	$
	K_2(x,y)=\psi \left( \frac{y}{|x|} \right) \left[ K(x,y)-K(x,0) \right] + \left[ 1-\psi\left( \frac{y}{|x|} \right) \right] K(x,y),
	$
we may control 
	$
	|K_2(x,y)| = |K(x,y) - K(x,0)|\lesssim \frac{|y|}{|x|^{N-\ell+1}}
	$
for $4|y| \leq |x| $,
and for $2|y|> |x|$ we have
	$
	|K_2(x,y)|=| K(x,y)| \lesssim \frac{1}{| x-y |^{N-\ell}}
	$.
For the {complementary part},  since $ 0 \leq  \psi \leq 1 $, we have 		
	\begin{align*}
	|K_2(x,y)| \leq \left| K(x,y) \right| + \left| K(x,y)-K(x,0) \right| \lesssim \frac{1}{|x-y|^{N-\ell}}  +\frac{|y|}{|x|^{N-\ell+1}}.
	\end{align*}
	{In this way, we can write}
	\begin{equation*}
	 \int_{\R^N} |K_2(x,y)|^q \, d\nu(x)
	 %&\leq  \int_{|x| < 2|y| } |K_2(x,y)|^q \, d\nu(x)  +
	 %\int_{2|y|\leq |x|< 4|y|} |K_2(x,y)|^q d\nu(x)  \nonumber\\
	%& \quad +   \int_{4|y| \leq  |x| } |K_2(x,y)|^q d\nu(x) \nonumber \\
	%& \lesssim  \int_{|x| < 2|y| } \frac{1}{|x-y|^{(N-\ell)q}} d\nu(x) +
	 %\int_{2|y|\leq |x|< 4|y|} \frac{1}{|x|^{(N-\ell)q}} d\nu(x) \nonumber \\
	%& \quad +  \int_{4|y| \leq  |x| } \frac{1}{|x|^{(N-\ell)q}} d\nu(x) \nonumber \\
	\lesssim \int_{|x| < 4|y| } \frac{1}{|x-y|^{(N-\ell)q}} \, d\nu(x)  +
	 \int_{2|y|\leq |x|} \frac{|y|^q}{|x|^{(N-\ell+1)q}} \, d\nu(x)
	\end{equation*}
and 
the conclusion follows from \eqref{12a} and \eqref{13a}. % in \eqref{eq:3005} we obtain \eqref{eq:3002}.
\end{proof}

\begin{example}\label{rem:100}
	If $ w(y)=|y|^\alpha $, with $0<\alpha<1$, and $\nu \in \mathcal{M}_{+}(\R^{N})$ satisfying
	\begin{equation}\label{eq:decayn}
	\nu(B(0,r)) \leq {C} r^{(N-\ell+\alpha)q} \quad \text{for any }\, r>0,
	\end{equation}
then \eqref{12a} and \eqref{13a} are satisfied. % {for such weight} $ w(y)=|y|^\alpha $, recovering \cite[Lemma 4.4]{BP}. 
In effect, from decay \eqref{eq:decayn} for balls centered at the origin, we proceed as {follows:}
	\begin{align*}
		\int_{B^c(0,2|y|)}\frac{1}{|x|^{(N-\ell+1)q}}\,  d\nu(x) &=\sum_{k=1}^{\infty} \int_{2^{k}|y|\leq |x| < 2^{k+1}|y|} \frac{1}{|x|^{(N-\ell+1)q}}\, d\nu(x) \\ 
		& \leq \sum_{k=1}^{\infty} (2^k |y|)^{(\ell-N-1)q}\,  \nu\left(B(0, 2^{k+1}|y|)\right) \\
		& \lesssim \sum_{k=1}^{\infty} (2^k |y|)^{(\ell-N-1)q}\, (2^{k+1}|y|)^{(N-\ell+\alpha) q} \\
		& = |y|^{(\alpha-1)q}\, 2^{(N-\ell+\alpha)q} \sum_{k=1}^\infty 2^{(\alpha-1)kq} 
		 = C_{N,\ell,q} \left[ \frac{|y|^{\alpha}}{|y|} \right]^{q}.
	\end{align*}
In order to show \eqref{13a}, we {decompose} 
\begin{align}
	\int_{B(0,4|y|)} \frac{1}{|x-y|^{(N-\ell)q}}\, d\nu(x) &= \int_{B\left(y,\frac{|y|}2\right)} \frac{1}{|x-y|^{(N-\ell)q}} \,d\nu(x)  \nonumber\\
		&\quad  + \int_{B(0,4|y|) \setminus B\left(y,\frac{|y|}2\right)} \frac{1}{|x-y|^{(N-\ell)q}} \, d\nu(x) \nonumber\\
	 &:= {(I) + (II)}. \label{partAB}
	\end{align}
%
% estimate ${(I)} $ and $ {(II)} $ in \eqref{partAB} as follows.
{For the second term we have} 
	\begin{align*}
		{(II)}\leq \left[\frac{|y|}{2}\right]^{-(N-\ell)q} \nu\left(B(0,4|y|)\right) \lesssim \left[\frac{|y|}{2}\right]^{-(N-\ell)q} (4|y|)^{(N-\ell+\alpha)q} \lesssim |y|^{\alpha q}. %\label{eq:102}
	\end{align*}
	On the other hand
	\begin{align*}
	(I) =\sum_{k=1}^\infty \int_{2^{-(k+1)}|y|\leq |z|<2^{-k}|y|} |z|^{(\ell-N)q}\, d\nu(z) 
	& \lesssim 2^{(N-\ell)q}\, |y|^{(\ell-N)q} \sum_{k=1}^\infty 2^{(N-\ell)qk}\, (2^{-k}|y|)^{(N-\ell+\alpha)q} \\
	& = 2^{(N-\ell)q}\, |y|^{\alpha q} \sum_{k=1}^\infty 2^{-\alpha q k}
	= C_{N,\ell,q,\alpha} \ |y|^{\alpha q}.
	\end{align*}
\end{example}

\begin{remark}
Combining the previous example and Lemma \ref{main} we recover the {Lemma 4.4} in \cite{BP}. 
\end{remark}

 	\begin{corollary}
 		Let $A(D)$ be a homogeneous linear differential operator of order $1\leq m<N$ on $\R^{N}$ from $E$ to $F$ and $ 0<\alpha<1 $. If $A(D)$ is elliptic and canceling, and $\mu \in \M(\R^N,E^{*})$  satisfies
 		\begin{equation*}
 			|\mu|(B(0,r)) \leq C\, r^{N-m+\alpha} , \quad \text{ for any } r>0,
 		\end{equation*}
		then there exists $f \in L^\infty_{1/w}(\R^N,F^{*})$, with $w(x)=|x|^{\alpha}$, solving \eqref{eq A(D)}.	
 	\end{corollary}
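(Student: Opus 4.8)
The plan is to derive this corollary as an immediate specialization of Theorem~\ref{theoB} once we verify that the hypotheses of that theorem are met for the particular choice $w(x)=|x|^{\alpha}$ with $0<\alpha<1$ and a measure $\mu$ obeying the single decay bound $|\mu|(B(0,r))\le C\,r^{N-m+\alpha}$. Thus the first step is to set $\nu:=|\mu|\in\M_+(\R^N)$ and observe that the stated decay is precisely \eqref{eq:decayn} of Example~\ref{rem:100} in the case $q=1$ and $\ell=m$, namely $\nu(B(0,r))\le C\,r^{N-m+\alpha}$. We must also record that $w(x)=|x|^{\alpha}$ is a legitimate weight (positive, locally integrable) which it is, since $\alpha>-N$.

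Second, I would invoke Example~\ref{rem:100} directly: it asserts that under \eqref{eq:decayn} the two testing conditions \eqref{12a} and \eqref{13a} hold for $w(y)=|y|^{\alpha}$. Specializing \eqref{12a} and \eqref{13a} to $q=1$ and $\ell=m$ yields exactly the testing conditions \eqref{tcA1} and \eqref{tcA2} appearing in Theorem~\ref{theoB}, with $C_1$ and $C_2$ the constants produced in Example~\ref{rem:100} (depending only on $N$, $m$, $\alpha$). So both testing hypotheses of Theorem~\ref{theoB} are satisfied by $\nu$ and $w$.

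Third, the operator hypotheses transfer verbatim: $A(D)$ is assumed elliptic and canceling, and $\mu\in\M(\R^N,E^*)$. Hence Theorem~\ref{theoB} applies and produces $f\in L^\infty_{1/w}(\R^N,F^*)$ with $w(x)=|x|^{\alpha}$ solving \eqref{eq A(D)}, which is the assertion. I would write the proof as: ``Put $\nu=|\mu|$ and $w(x)=|x|^{\alpha}$. By Example~\ref{rem:100} with $q=1$ and $\ell=m$, the decay hypothesis on $|\mu|$ implies that $\nu$ and $w$ satisfy \eqref{12a}--\eqref{13a}, i.e.\ the testing conditions \eqref{tcA1}--\eqref{tcA2}. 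Since $A(D)$ is elliptic and canceling, Theorem~\ref{theoB} yields $f\in L^\infty_{1/w}(\R^N,F^*)$ solving \eqref{eq A(D)}.''

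There is essentially no obstacle here, since all the analytic work has already been carried out in Lemma~\ref{main}, Example~\ref{rem:100}, and Theorem~\ref{theoB}; the only minor point to be careful about is the bookkeeping of exponents, namely that setting $q=1$ turns the exponent $(N-\ell+\alpha)q$ in \eqref{eq:decayn} into $N-m+\alpha$ and likewise collapses the powers in \eqref{12a}, \eqref{13a} to match \eqref{tcA1}, \eqref{tcA2}. If one preferred a self-contained statement I would alternatively remark that this corollary also follows from the discussion around \cite[Remark 4.1]{BP} specialized to $\alpha$ in the range $(0,1)$, but the cleanest route is the one above through Theorem~\ref{theoB}.
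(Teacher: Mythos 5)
Your proposal is correct and follows exactly the route the paper intends: specialize Example~\ref{rem:100} (with $q=1$, $\ell=m$) to see that the decay bound gives the testing conditions \eqref{tcA1}--\eqref{tcA2}, then apply Theorem~\ref{theoB}. The exponent bookkeeping you check is right, so nothing is missing.
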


\section{Proof of Theorem \ref{theoC}}\label{sect theoC}

As a direct consequence of Theorem \ref{theoB}, it is sufficient to prove that, if $\mu$ is a Borel (complex) measure taking values in $E^{*}$ and $w$ is a weight in the $A_1$ class satisfying \eqref{decorigin} and \eqref{wolff peso}, then the testing conditions \eqref{tcA1} and \eqref{tcA2} are satisfied for $\nu:=|\mu|$. It follows from the next slight generalization of stronger conditions for the case $q=1$. % of this c
\begin{proposition}\label{propmain}
	Suppose that $1 \leq q < \infty$, $0 < \ell < N$, $\nu \in \mathcal{M}_{+}(\R^{N})$, and  {$w \in L^{1}_{loc}(\R^{N})$ positive}. If there exists $C>0$ such that 
	\begin{equation}\label{decorigin2}
	\nu(B(0,r)) \leq {C}\left( r^{-\ell} \int_{B(0,r)} w(y)\, dy \right)^q % \quad \text{ for any }r>0.
	\end{equation}
holds for any $r>0$, then 
\begin{equation} \label{12aa}
	{\left(\int_{B^{c}(0,2|y|)} \dfrac{1}{|x|^{(N-\ell+1)q}} \, d\nu(x)\right)^{1/q} \leq C_{3} \, \frac{{Mw(y)}}{|y|}}, \quad \quad a.e. \,\, y \in \R^{N},
\end{equation}
%\eqref{12a} is satisfied. 
Moreover, if 
	\begin{equation}\label{decoutorigin2}
	\nu(B(x,r)) \leq {C}\left(|x|^{-\ell} \int_{B(x,r)} {w(y)\, dy} \right)^q 
		\end{equation}
holds for any $r<\frac{|x|}{2}$, then 
\begin{equation} \label{13aa}
	\left(\int_{B(0,4|y|)} \dfrac{1}{|x-y|^{(N-\ell)q}} \, d\nu(x)\right)^{1/q} \leq C_{4}\, {Mw(y)},
	\quad \quad a.e. \,\,y \in \R^{N}.
\end{equation}
%\eqref{13a} is satisfied. 	
{Moreover, if $\nu$ satisfy \eqref{decorigin2} and \eqref{decoutorigin2} with $w$ belongs to the class  $A_{1}$ then \eqref{12a}
and \eqref{13a} are fullfilled.}   
\end{proposition}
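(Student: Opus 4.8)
The statement is Proposition \ref{propmain}, which has three parts: (i) the decay \eqref{decorigin2} at the origin implies the first testing condition \eqref{12aa} with $w$ replaced by $Mw$; (ii) the off-origin decay \eqref{decoutorigin2} implies the second testing condition \eqref{13aa} with $w$ replaced by $Mw$; and (iii) when $w \in A_1$, one may replace $Mw$ by $w$ (up to the constant $[w]_{A_1}$) to recover \eqref{12a} and \eqref{13a}. The overall strategy is a dyadic decomposition of the domain of integration in each integral, exactly parallel to the computations already carried out in Example \ref{rem:100}, but using the volume-weighted bound $r^{-\ell}\int_{B(0,r)}w$ in place of the pure power $r^{N-\ell+\alpha}$, and then converting the averaged quantity $r^{-N}\int_{B(0,r)}w$ into a maximal function value.

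\textbf{Step 1: deriving \eqref{12aa} from \eqref{decorigin2}.} I would fix $y$ and decompose $B^c(0,2|y|) = \bigsqcup_{k\ge 1}\{2^k|y|\le |x|<2^{k+1}|y|\}$. On each annulus, $|x|^{-(N-\ell+1)q}\le (2^k|y|)^{-(N-\ell+1)q}$, so
\[
\int_{B^c(0,2|y|)} \frac{d\nu(x)}{|x|^{(N-\ell+1)q}} \le \sum_{k\ge1} (2^k|y|)^{-(N-\ell+1)q}\,\nu\big(B(0,2^{k+1}|y|)\big).
\]
Now apply \eqref{decorigin2} with $r = 2^{k+1}|y|$: $\nu(B(0,2^{k+1}|y|)) \le C\big((2^{k+1}|y|)^{-\ell}\int_{B(0,2^{k+1}|y|)}w\big)^q$. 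Writing $\int_{B(0,2^{k+1}|y|)}w = |B(0,2^{k+1}|y|)|\cdot\fint_{B(0,2^{k+1}|y|)}w \lesssim (2^{k+1}|y|)^{N} Mw(0)$ — and more carefully, since the ball $B(0,2^{k+1}|y|)$ contains $y$ and the averages over balls of radius $\gtrsim |y|$ centered at $0$ are comparable to averages over a ball of the same order centered at $y$ (by the doubling of Lebesgue measure), one gets $\fint_{B(0,2^{k+1}|y|)}w \lesssim Mw(y)$. Substituting and summing the geometric series $\sum_k 2^{(\ell-N-1)qk}2^{(N-\ell)qk} = \sum_k 2^{-qk}<\infty$ yields $\int_{B^c(0,2|y|)}|x|^{-(N-\ell+1)q}\,d\nu \lesssim (Mw(y))^q/|y|^q$, which is \eqref{12aa}.

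\textbf{Step 2: deriving \eqref{13aa} from \eqref{decoutorigin2}.} Following Example \ref{rem:100}, split $B(0,4|y|) = B(y,|y|/2) \cup \big(B(0,4|y|)\setminus B(y,|y|/2)\big) =: (I)+(II)$. For $(II)$, $|x-y|\ge |y|/2$ so $(II)\le (|y|/2)^{-(N-\ell)q}\nu(B(0,4|y|))$; then \eqref{decorigin2} (which holds under the hypotheses of part (iii), and can be invoked here — or alternatively one applies \eqref{decoutorigin2} is only for small radii, so for $(II)$ one needs \eqref{decorigin2}) gives $\nu(B(0,4|y|))\lesssim ((4|y|)^{-\ell}\int_{B(0,4|y|)}w)^q \lesssim |y|^{(N-\ell)q}(Mw(y))^q$, so $(II)\lesssim (Mw(y))^q$. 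For $(I)$, decompose $B(y,|y|/2)\setminus\{y\} = \bigsqcup_{k\ge1}\{2^{-(k+1)}|y|\le |x-y|<2^{-k}|y|\}$; on each piece $|x-y|^{-(N-\ell)q}\le (2^{-(k+1)}|y|)^{-(N-\ell)q}$ and $\nu$ of that annulus is at most $\nu(B(y,2^{-k}|y|))$. Since $2^{-k}|y|<|y|/2$ we may apply \eqref{decoutorigin2} with $x=y$, $r=2^{-k}|y|$: $\nu(B(y,2^{-k}|y|))\le C(|y|^{-\ell}\int_{B(y,2^{-k}|y|)}w)^q \lesssim (|y|^{-\ell}(2^{-k}|y|)^N Mw(y))^q$. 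Plugging in and summing $\sum_k 2^{(N-\ell)qk}2^{-Nqk} = \sum_k 2^{-\ell qk}<\infty$ gives $(I)\lesssim |y|^{(N-\ell)q - Nq + \ell q \cdot 0}\cdots$ — careful bookkeeping of the powers of $|y|$ (exactly as in Example \ref{rem:100}) produces $(I)\lesssim (Mw(y))^q$. Combining, $(\int_{B(0,4|y|)}|x-y|^{-(N-\ell)q}d\nu)^{1/q}\lesssim Mw(y)$, which is \eqref{13aa}.

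\textbf{Step 3: the $A_1$ reduction.} If $w\in A_1$, then by the characterization recalled in Subsection \ref{subsect weights}, $Mw(y)\le [w]_{A_1}w(y)$ for a.e. $y$. Substituting this into \eqref{12aa} and \eqref{13aa} immediately gives \eqref{12a} and \eqref{13a} with $C_3 = C_3'[w]_{A_1}$, $C_4 = C_4'[w]_{A_1}$. This is routine.

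\textbf{Main obstacle.} The only genuinely delicate point is justifying, in Steps 1 and 2, that an average of $w$ over a ball centered at the origin (or at $y$) of radius comparable to $|y|$ is controlled by $Mw(y)$: one must check that such a ball is contained in a ball centered at $y$ of comparable radius (e.g. $B(0,2^{k+1}|y|)\subset B(y,2^{k+2}|y|)$ since $|y|\le 2^{k+1}|y|$), and that replacing $\int_{B(0,R)}w$ by $|B(0,R)|\fint_{B(y,cR)}w \le CR^N Mw(y)$ introduces only a dimensional constant via the doubling of Lebesgue measure. Once this observation is in place, everything else is the geometric-series bookkeeping already modeled on Example \ref{rem:100}, and I would present those computations compactly by reference to that example rather than repeating them in full.
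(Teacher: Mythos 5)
Your proposal is correct and follows essentially the paper's own argument: dyadic annuli together with the inclusion $B(0,2^{k+1}|y|)\subset B(y,2^{k+2}|y|)$ to pass from averages over balls centered at the origin to $Mw(y)$ for \eqref{12aa}, the same splitting $(I)+(II)$ of \eqref{13aa} with \eqref{decorigin2} invoked for $(II)$ (your reading that the second part implicitly retains \eqref{decorigin2} matches the paper's proof), and $Mw\leq [w]_{A_1}w$ a.e.\ for the $A_1$ reduction. The only cosmetic difference is that you handle the near-singularity term $(I)$ by dyadic annuli around $y$ (as in Example \ref{rem:100}), whereas the paper rewrites it as the one-dimensional integral $\int_0^{|y|/2}\nu(B(y,r))\,r^{-(N-\ell)q-1}\,dr$ before applying \eqref{decoutorigin2} at the point $y$; these are equivalent discretizations of the same estimate.
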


\begin{proof}
Following the same arguments to obtain \eqref{12a} in Example \ref{rem:100}, we have
	\begin{align*}
	\int_{B^c(0,2|y|)} \frac{1}{|x|^{(N-\ell+1)q}}d\nu(x) %&= \sum_{k=1}^{\infty} \int_{2^{k}|y|\leq |x| < 2^{k+1}|y|} \frac{1}{|x|^{(N-\ell+1)q}}d\nu(x) \\
	& \leq \sum_{k=1}^{\infty} (2^k |y|)^{(\ell-N-1)q}\, \nu\left(B(0, 2^{k+1}|y|)\right) \\
	%& \leq \textcolor{blue}{\sum_{k=1}^{\infty} (2^k |y|)^{(\ell-N-1)q}\, \nu\left(B(y, 2^{k+2}|y|)\right)} \\
	& {\lesssim} \sum_{k=1}^{\infty} (2^k |y|)^{(\ell-N-1)q}\, (2^{k+1}|y|)^{(-\ell+N)q} \left(\fint_{B(0, 2^{k+1}|y|)} w(x)\, dx \right)^q \\
	& {\lesssim} \sum_{k=1}^{\infty} (2^k |y|)^{(\ell-N-1)q}\, (2^{k+1}|y|)^{(-\ell+N)q} \left(\fint_{\textcolor{blue}{B(y, 2^{k+2}|y|)}} w(x)\, dx \right)^q \\
	%& {\lesssim} 2^{-\ell q} |y|^{-(N+1)q} \sum_{k=1}^{\infty} 2^{-k(N+1)q} 2^{(k+1)Nq} |y|^{Nq} w(y)^q \\
	& \lesssim  \left[ \frac{{Mw(y)}}{|y|} \right]^q \sum_{k=1}^{\infty} 2^{-qk} \\
 &\lesssim \left[ \frac{{Mw(y)}}{|y|} \right]^q,
	\end{align*}
as desired.

Splitting  \eqref{13aa}  analogously as in \eqref{partAB}, we may control 
	\begin{align*}
	{(II)}\leq \left( \frac{|y|}{2} \right)^{(\ell-N)q} \nu(B(0,4|y|))
	&\lesssim |y|^{-Nq}\, |B(0,4|y|)|^q \left( \fint_{B(0,4|y|)} w(x)\, dx\right)^q \\
	&\lesssim |y|^{-Nq}\, |B(0,4|y|)|^q {\left( \fint_{B(y,5|y|)} w(x)\, dx\right)^q }\\
	&\lesssim  {[Mw(y)}]^q.
	\end{align*}
Now we estimate $ {(I)} $. Setting $ S_x := \left\{ r\in \R : \ r>|x-y| \right\}$ and since {$ B\left(y,|y|/2 \right)\subset B(0,2|y|) $}, we have
	\begin{align*}
	{(I)}&= (N-\ell)q \int_{\R^N} \chi_{B\left(y, \frac{|y|}{2}\right)}(x) \left( \int_{|x-y|}^\infty r^{(\ell-N)q-1}\, dr \right) d\nu(x) \\
	&= (N-\ell)q \int_{\R^N} \chi_{B\left(y, \frac{|y|}{2}\right)}(x) \left( \int_{0}^\infty \frac{\chi_{S_x}(r)}{r^{(N-\ell)q+1}}\, dr \right) d\nu(x) \\
	&= (N-\ell)q  \int_{0}^\infty \left(\int_{ B\left(y,\frac{|y|}{2}\right)\cap B(y,r)} \frac{1}{r^{(N-\ell)q+1}}\, d\nu(x)\right) dr  \\
	& = (N-\ell)q  \left( \int_{0}^{|y|/2} \frac{ \nu\left(B(y,r)\right)}{r^{(N-\ell)q+1}}\, dr +
	\int_{|y|/2}^\infty \frac{\nu\left(B(y,\frac{|y|}{2})\right)}{r^{(N-\ell)q+1}} \, dr \right). 
%	& \lesssim (N-\ell)q \ {w(y)^q} + |y|^{-Nq} 2^{(N-2\ell)q} |B(0,2|y|)|^q \left( \fint_{B(0,2|y|)} w(x)dx \right)^q  \\
%	& \lesssim (N-\ell)q \ {w(y)^q} + |y|^{-Nq} 2^{(N-2\ell)q} (2|y|)^{Nq} w(y)^q \\
%	& = C_{N,\ell,q} \ w(y)^q. 
	\end{align*}
The second term is {controlled} exactly as $(II)$ above (using only \eqref{decorigin2}), since
\begin{align*}
\int_{|y|/2}^\infty \frac{ \nu\left(B(y,\frac{|y|}{2})\right)}{r^{(N-\ell)q+1}} \, dr 
& \lesssim  |y|^{(\ell-N)q}\, |y|^{-\ell q}\, |B(0,2|y|)|^q \left( \fint_{B(0,2|y|)} w(x)\, dx \right)^q \\
& \lesssim  |y|^{(\ell-N)q}\, |y|^{-\ell q}\, |B(0,2|y|)|^q {\left( \fint_{B(y,3|y|)} w(x)\, dx \right)^q} \\ 
 &\lesssim {{[Mw(y)]^q}.}
 \end{align*}
For the remaining term, we use \eqref{decoutorigin2} to obtain
	\begin{align*}
	\int_{0}^{|y|/2}  \frac{\nu (B(y,r))}{r^{(N-\ell)q+1}}\, dr &\lesssim |y|^{-\ell q} \, \int_{0}^{|y|/2} r^{(\ell-N)q-1} \, |B(y,r)|^q \left(\fint_{B(y,r)} w(x)dx \right)^q dr\\
	& \lesssim {[Mw(y)}]^q\ |y|^{-\ell q}  \int_{0}^{|y|/2} r^{\ell q-1} dr \\
	& \lesssim \, {{[Mw(y)]^q}}.
	\end{align*}
Combining the previous estimates we obtain  \eqref{13aa}.
 \end{proof}

%\begin{remark}\label{ex3}
%It follows from the previous proof that, if $\nu$ and $w$ satisfy \eqref{decorigin} and the potential control \eqref{wolff peso}, then \eqref{13a} holds {for $ q=1 $}.
%\end{remark}

{
As an immediate consequence of the previous result and  the proof of Lemma \ref{main},
we obtain the following self-improvement.
\begin{lemma}
Assume $N \geq 2$, $0 < \ell < N$ and $K(x,y) \in L^{1}_{loc}(\R^N \times \R^N, \mathcal{L}(F,V))$ 
satisfying  \eqref{kc} and \eqref{kd}. 
Suppose $1 \leq q < \infty$, and let $w$ be a weight and $\nu \in \mathcal{M}_{+}(\R^{N})$ satisfy 
 \eqref{decorigin2} and \eqref{decoutorigin2}. 
If $L(D)$ is cocanceling, then there exists $C>0$ such that 
\begin{equation*}\label{mainineq}
	\left( \int_{\R^N} \left| \int_{\R^N} K(x,y)g(y) \; dy \right|^q d\nu(x)  \right)^{1/q} 
\leq C \int_{\R^{N}} |g(x)|Mw(x) \, dx, 
\end{equation*}
for all $g \in {L^1_{Mw}}(\R^N,F)$ satisfying $L(D)g=0$ in the sense of distributions. 
\end{lemma}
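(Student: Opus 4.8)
The plan is to show that the proof of Lemma~\ref{main} goes through \emph{verbatim} once one replaces the two testing conditions \eqref{12a} and \eqref{13a} by their consequences \eqref{12aa} and \eqref{13aa} established in Proposition~\ref{propmain}. Indeed, inspecting the argument for Lemma~\ref{main}, the hypotheses \eqref{12a} and \eqref{13a} are used in exactly two places: first, \eqref{12a} is invoked to apply the Hardy-type inequality (Lemma~\ref{weight1}) with $v(y) = w(y)|y|^{-1}$ in the estimate for $J_1$; second, \eqref{12a} and \eqref{13a} together bound $\int_{\R^N}|K_2(x,y)|^q\,d\nu(x)$ by $[w(y)]^q$ in the estimate for $J_2$. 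Since Proposition~\ref{propmain} shows that the geometric decay conditions \eqref{decorigin2} and \eqref{decoutorigin2} imply \eqref{12aa} and \eqref{13aa}, which are precisely \eqref{12a} and \eqref{13a} with $w$ replaced by $Mw$, one may rerun the entire argument of Lemma~\ref{main} with the weight $Mw$ in place of $w$.

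More concretely, I would proceed as follows. First I would note that $Mw$ is a (possibly infinite-valued, but by the assumption that \eqref{decorigin2}, \eqref{decoutorigin2} hold and that $w\in L^1_{loc}$ and the conclusion involves $\int |g|\,Mw$, this is the natural space) weight, and that by Proposition~\ref{propmain} the pair $(\nu, Mw)$ satisfies the testing conditions \eqref{12a} and \eqref{13a} of Lemma~\ref{main}. Then I would apply Lemma~\ref{main} directly with $w$ there taken to be $Mw$: this immediately yields
\[
\left( \int_{\R^N} \left| \int_{\R^N} K(x,y)g(y)\,dy \right|^q d\nu(x)  \right)^{1/q} \leq C \int_{\R^N} |g(x)|\,Mw(x)\,dx
\]
for all $g\in L^1_{Mw}(\R^N,F)$ with $L(D)g=0$ in the sense of distributions. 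That is the claimed inequality, so the lemma follows. I would phrase the one-line proof accordingly, citing Lemma~\ref{main} and Proposition~\ref{propmain}.

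The only genuine point requiring care — the ``main obstacle,'' though a mild one — is a bookkeeping check that Lemma~\ref{main} is stated for an arbitrary \emph{weight}, so that the substitution $w \rightsquigarrow Mw$ is legitimate even when $Mw$ is not locally integrable or not finite on a null set; since the conclusion and the hypotheses \eqref{12a}, \eqref{13a} only involve $Mw$ through integrals against $d\nu$ and $dx$ (where the $g$ in the conclusion is assumed to lie in $L^1_{Mw}$, making the right-hand side finite by hypothesis), no integrability of $Mw$ itself is needed and the argument is unaffected. One should also observe that $g\in L^1_{Mw}$ implies $g \in L^1_w$ pointwise since $w(x)\le Mw(x)$ a.e.\ when $w$ is, say, in $A_1$ — but even without that, the computation in Lemma~\ref{main} is purely a chain of pointwise and integral inequalities that never uses membership of $g$ in any smaller space, so nothing is lost. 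Hence the proof is genuinely immediate.

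\begin{proof}
By Proposition~\ref{propmain}, the conditions \eqref{decorigin2} and \eqref{decoutorigin2} imply that $\nu$ and the weight $Mw$ satisfy the testing conditions \eqref{12a} and \eqref{13a} (these are \eqref{12aa} and \eqref{13aa}, which are exactly \eqref{12a} and \eqref{13a} with $w$ replaced by $Mw$). Applying Lemma~\ref{main} with the weight $Mw$ in place of $w$ yields the desired estimate for all $g \in L^1_{Mw}(\R^N,F)$ satisfying $L(D)g = 0$ in the sense of distributions.
\end{proof}
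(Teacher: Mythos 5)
Your proposal is correct and matches the paper's own argument: the paper likewise obtains this lemma as an immediate consequence of Proposition~\ref{propmain} (which upgrades \eqref{decorigin2} and \eqref{decoutorigin2} to the testing conditions \eqref{12a} and \eqref{13a} with $Mw$ in place of $w$) combined with the proof of Lemma~\ref{main} run with the weight $Mw$. Your remark that the argument of Lemma~\ref{main} never uses local integrability of the weight, so the substitution $w \rightsquigarrow Mw$ is harmless, is exactly the small point that justifies the paper's phrase ``the proof of Lemma~\ref{main}'' rather than the lemma itself.
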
}

\section{Applications and general comments}\label{sect applications}

\subsection{Non-weighted Lebesgue solvability} The Theorem \ref{theoC} applied to the case when $w\equiv 1$, which belongs to the class of $A_1$-weights, recovers precisely the Theorem \ref{theorem BP}. The following stronger version is derived from Theorem \ref{theoB}:

\begin{theorem}
Let $A(D)$ be a homogeneous linear differential operator of order $1\leq m<N$ on $\R^{N}$ from $E$ to $F$ and $\mu \in \M(\R^N,E^{*})$. If $A(D)$ is elliptic and canceling, and $\nu:=|\mu|$  satisfies 
	\begin{equation*}\label{tcA1a}
		\int_{B^{c}(0,2|y|)} \dfrac{1}{|x|^{N-m+1}} \, d\nu(x) \leq C_{1}\, \frac{1}{|y|}, \quad \quad a.e. \,\, y \in \R^{N},
	\end{equation*}
and
	\begin{equation*}\label{tcA2a}
		\int_{B(0,4|y|)} \dfrac{1}{|x-y|^{N-m}} \, d\nu(x) \leq C_{2},
		\quad \quad a.e. \,\,y \in \R^{N},
	\end{equation*}
{for some positive constants $C_1$ and $C_2$,} then there exists $f \in L^\infty(\R^N,F^{*})$ solving \eqref{eq A(D)}. 
\end{theorem}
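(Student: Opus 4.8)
The plan is to derive the statement as an immediate corollary of Theorem \ref{theoB} by taking $w \equiv 1$. First I would observe that the constant function $w(x) \equiv 1$ is trivially a positive function in $L^1_{loc}(\R^N)$, and indeed belongs to the Muckenhoupt class $A_1$ with $[w]_{A_1} = 1$. Then I would simply substitute $w \equiv 1$ into the two testing conditions \eqref{tcA1} and \eqref{tcA2} of Theorem \ref{theoB}: condition \eqref{tcA1} becomes exactly
\[
\int_{B^{c}(0,2|y|)} \dfrac{1}{|x|^{N-m+1}} \, d\nu(x) \leq C_{1}\, \frac{1}{|y|}, \quad a.e. \,\, y \in \R^{N},
\]
and condition \eqref{tcA2} becomes exactly
\[
\int_{B(0,4|y|)} \dfrac{1}{|x-y|^{N-m}} \, d\nu(x) \leq C_{2}, \quad a.e. \,\, y \in \R^{N},
\]
which are precisely the hypotheses of the present theorem.

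Having made this identification, the conclusion follows directly: since $A(D)$ is elliptic and canceling, $\nu := |\mu|$ satisfies the specialized testing conditions, and $w \equiv 1$ is a positive locally integrable weight, Theorem \ref{theoB} yields a vector field $f \in L^\infty_{1/w}(\R^N,F^{*}) = L^\infty(\R^N,F^{*})$ solving $A^*(D)f = \mu$, as desired. The only point worth spelling out is the identification $L^\infty_{1/w}(\R^N,F^*) = L^\infty(\R^N,F^*)$ when $w \equiv 1$, which is immediate from the definition $\|f\|_{L^\infty_{1/w}} = \|f/w\|_{L^\infty}$.

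There is essentially no obstacle here: this statement is a direct specialization, and the heavy lifting has already been done in the proof of Theorem \ref{theoB} (in particular in Lemma \ref{main}, the weighted $L^1$ Stein-Weiss type inequality). The only conceptual remark to include is that this version is genuinely \emph{stronger} than Theorem \ref{theorem BP}: the testing conditions above are weaker than the Ahlfors-type bound \eqref{ahlfors BP1} together with the Wolff-type potential control \eqref{wolff BP1}, since those stronger hypotheses imply the present ones via Proposition \ref{propmain} applied with $q = 1$ and $w \equiv 1$ (noting that $M w \equiv 1$ as well). Thus it suffices to write: \emph{Apply Theorem \ref{theoB} with $w \equiv 1$, which belongs to $A_1$ and is a positive function in $L^1_{loc}(\R^N)$; the testing conditions \eqref{tcA1} and \eqref{tcA2} reduce to the stated hypotheses, and $L^\infty_{1/w} = L^\infty$.}
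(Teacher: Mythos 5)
Your proposal is correct and coincides with the paper's own argument: the theorem is stated there precisely as the specialization of Theorem \ref{theoB} to the weight $w \equiv 1$, with the testing conditions \eqref{tcA1}--\eqref{tcA2} reducing to the stated hypotheses and $L^\infty_{1/w} = L^\infty$. Nothing further is needed.
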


%%%%%%%%

\subsection{Two-weighted inequalities} Let $u$ and $v$ be measurable, {non-negative} and finite almost everywhere {functions}. A particular case of Lemma \ref{main} can be stated as follows:

\begin{lemma}
Assume $N \geq 2$, $1 \leq q <\infty$,  $0 < \ell < N$ and $K(x,y) \in L^{1}_{loc}(\R^N \times \R^N, \mathcal{L}(F,V))$ 
satisfying \eqref{kc} and \eqref{kd}. Suppose $u$ and $v$ satisfy the testing conditions
\begin{equation} \label{12a1}
	{\left(\int_{B^{c}(0,2|y|)} \dfrac{u(x)}{|x|^{(N-\ell+1)q}} \, dx\right)^{1/q} \leq C_{3}\, \frac{v(y)}{|y|}}, \quad \quad a.e. \,\, y \in \R^{N},
\end{equation}
{and}
\begin{equation} \label{13a1}
	\left(\int_{B(0,4|y|)} \dfrac{u(x)}{|x-y|^{(N-\ell)q}} \, dx\right)^{1/q} \leq C_{4} \, v(y),
	\quad \quad a.e. \,\,y \in \R^{N}.
\end{equation}
If $L(D)$ is cocanceling, then there exists $C>0$ such that 
\begin{equation}\label{mainineq-two-weight}
	\left( \int_{\R^N} \left| \int_{\R^N} K(x,y)g(y) \; dy \right|^q u(x)\, dx  \right)^{1/q} 
\leq C \int_{\R^{N}} |g(x)|v(x) \, dx, 
\end{equation}
for all $g \in {L^1_v}(\R^N,F)$ satisfying $L(D)g=0$, in the sense of distributions.
\end{lemma}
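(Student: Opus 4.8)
The plan is to recognize the final Lemma as the special case of Lemma \ref{main} obtained by replacing the measure $\nu$ with the absolutely continuous measure $d\nu(x) := u(x)\,dx$. Indeed, with this choice the hypotheses \eqref{12a} and \eqref{13a} of Lemma \ref{main} become exactly \eqref{12a1} and \eqref{13a1} (with $w$ playing the role of $v$), and the conclusion
\[
\left( \int_{\R^N} \left| \int_{\R^N} K(x,y)g(y)\,dy \right|^q d\nu(x) \right)^{1/q} \leq C \int_{\R^N} |g(x)|\,v(x)\,dx
\]
is precisely \eqref{mainineq-two-weight}. So the proof is a one-line reduction.

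The only point requiring a word of care is that Lemma \ref{main} is stated for $\nu \in \mathcal{M}_+(\R^N)$, i.e. a $\sigma$-finite non-negative Borel measure, whereas here $u$ is only assumed measurable, non-negative and finite a.e., so $d\nu = u\,dx$ need not be $\sigma$-finite a priori. First I would observe that the testing condition \eqref{12a1}, applied on annuli $2^{k}|y| \le |x| < 2^{k+1}|y|$ as in the proof of Example \ref{rem:100}, forces $\int_{B^c(0,2|y|)} u(x)\,dx < \infty$ for a.e. $y$, and similarly \eqref{13a1} controls the mass of $u$ near the origin; chaining these over $|y| = 2^{-j}$, $j \in \Z$, shows that $u$ is integrable on every annulus $\{2^{j} \le |x| < 2^{j+1}\}$, hence $d\nu = u\,dx$ is $\sigma$-finite. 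Thus Lemma \ref{main} applies verbatim.

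I do not anticipate any real obstacle: the statement is a corollary of Lemma \ref{main} by specialization. The proof would read essentially: \textbf{Proof.} Apply Lemma \ref{main} with $d\nu(x) := u(x)\,dx$ and $w := v$. The testing conditions \eqref{12a} and \eqref{13a} for this $\nu$ are exactly \eqref{12a1} and \eqref{13a1}; as noted above, these conditions also guarantee that $u$ is locally integrable on $\R^N \setminus \{0\}$ with controlled growth, so $\nu$ is $\sigma$-finite and non-negative. The hypotheses \eqref{kc}, \eqref{kd} on $K$ and the cocanceling hypothesis on $L(D)$ are unchanged. The conclusion of Lemma \ref{main} is then precisely \eqref{mainineq-two-weight}, for all $g \in L^1_v(\R^N,F)$ with $L(D)g = 0$ in the sense of distributions. $\square$
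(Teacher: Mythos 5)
Your proof is correct and is essentially the paper's own route: the paper presents this lemma precisely as the particular case of Lemma \ref{main} obtained by taking $d\nu(x)=u(x)\,dx$ and $w=v$, under which \eqref{12a} and \eqref{13a} become \eqref{12a1} and \eqref{13a1}. Your additional check that the testing conditions force $u\,dx$ to be $\sigma$-finite (so that $\nu\in\mathcal{M}_+(\R^N)$ in the paper's sense) is a sensible refinement the paper leaves implicit.
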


Estimates of the type \eqref{mainineq-two-weight} for $u(x):=|x|^{-N+(N-\ell)q}$ and $v(x) \equiv 1$, with $1\leq q < \frac{N}{N-\ell}$, and $g=A(D)\varphi$ for $\varphi \in C_{c}^{\infty}(\R^{N},E)$, where $A(D)$ is elliptic and canceling, were obtained by Hounie and Picon in \cite[Lemma 2.1]{HHP}. The conditions  \eqref{12a1} and \eqref{13a1} can be understood as testing conditions for the two-weighted Stein-Weiss inequality in $L^1$ for this special class of vector fields (see \cite{DNP} for more details).

\subsection{Weighted $L^1$ type estimate for Riesz transform} 
For $N\geq 2$ and $0<\ell<N$, consider the classical Riesz potential operator
\[I_\ell f(x) := \dfrac{1}{\gamma(\ell)} \int_{\R^N} \frac{f(y)}{|x-y|^{N-\ell}} \; dy.\]
Schikorra, Spector and Van Schaftingen proved in \cite[Theorem A]{SSV} that there exists a constant $C=C(N,\ell)>0$ such that
\begin{equation}\label{riesz}
\|I_{\ell}u\|_{L^{N/(N-\ell)}}\leq C \|Ru\|_{L^{1}}, \quad  \text{for all} \,\,\, u \in C_{c}^{\infty}(\R^{N}),
\end{equation}
with $Ru \in L^{1}(\R^N,\R^{N})$, where $Ru=(R_{1}u,...,R_{N}u)$ is the vector Riesz transform. 
The previous inequality  recovers the boundedness of Riesz potential operator from Hardy space $H^1(\R^N)$ to $L^{N/(N-\ell)}(\R^N)$, since the norm {$\|u\|_{H^{1}}$} is equivalent to $\|u\|_{L^{1}}+\|Ru\|_{L^{1}}$. We point out \eqref{riesz} breaks down replacing {$\|u\|_{H^{1}}$ by $\|u\|_{L^{1}}$} (see \cite{DNP}). We state the following extension:  % However the inequality (1.1) breaks down for p = 1} 

\begin{theorem}
Let $N \geq 2$, $0<\ell<N$ and $1\leq q <\infty$. If $\nu \in \M_{+}(\R^N)$ and $w$ satisfies \eqref{12a} and \eqref{13a} then there exists $C>0$ such that 
\begin{equation*}
\left( \int_{\R^N} \left|I_{\ell}u(x)\right|^{q} \, d\nu \right)^{1/q}   %\|\mu \|_{L^{1,(N-1)}} 
		\leq C \|Ru\|_{L_{w}^{1}}, \quad  {\text{for all}} \,\,\, u \in C_{c}^{\infty}(\R^{N}),
\end{equation*}
with $Ru \in L_{w}^{1}(\R^N,\R^{N})$. 
\end{theorem}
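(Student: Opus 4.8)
The plan is to deduce this from Lemma~\ref{main} by exhibiting a kernel $K(x,y)$ and a cocanceling operator $L(D)$ that fit the hypotheses. First I would recall the key algebraic identity from \cite{SSV}: for $u \in C_c^\infty(\R^N)$ one has the pointwise representation
\begin{equation*}
I_\ell u(x) = c_{N,\ell} \int_{\R^N} \frac{x-y}{|x-y|^{N-\ell+1}} \cdot Ru(y) \, dy,
\end{equation*}
obtained by writing $u = -\sum_{j=1}^N R_j(R_j u)$ and using that $R_j$ composed with $I_\ell$ has convolution kernel a constant multiple of $(x_j-y_j)|x-y|^{-(N-\ell+1)}$. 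Thus, setting $g := Ru = (R_1 u, \dots, R_N u)$ and $K(x,y) := c_{N,\ell}\,(x-y)|x-y|^{-(N-\ell+1)}$ (a row vector, i.e.\ an element of $\mathcal{L}(\R^N,\R)$), we have $I_\ell u(x) = \int_{\R^N} K(x,y) g(y)\, dy$. Moreover $g = Ru$ lies in the kernel of the cocanceling operator $L(D) = \operatorname{curl}$ (equivalently $\partial_i R_j u = \partial_j R_i u$, which holds in the sense of distributions since the symbol $-i\xi_i \xi_j/|\xi|$ is symmetric in $i,j$); that $\operatorname{curl}$ is cocanceling is standard and recalled in Subsection~\ref{subsect cocanc}.

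Next I would verify that this particular $K$ satisfies the two kernel bounds \eqref{kc} and \eqref{kd} required by Lemma~\ref{main}. For \eqref{kc}, $|K(x,y)| = c_{N,\ell}|x-y|^{-(N-\ell)}$, so this holds with equality up to the constant. For \eqref{kd}, note $K(x,0) = c_{N,\ell}\, x |x|^{-(N-\ell+1)}$, and the function $z \mapsto z|z|^{-(N-\ell+1)}$ is smooth away from the origin and homogeneous of degree $\ell - N$, hence its gradient is bounded by $C|z|^{\ell-N-1}$; the mean value inequality on the segment from $x$ to $x-y$, which stays in the region $|z| \geq |x|/2$ when $2|y| \leq |x|$, gives $|K(x,y) - K(x,0)| \lesssim |y|\,|x|^{\ell-N-1} = |y|/|x|^{N-\ell+1}$, which is exactly \eqref{kd}. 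With $\nu$ and $w$ satisfying \eqref{12a} and \eqref{13a} by hypothesis, Lemma~\ref{main} applies directly and yields
\begin{equation*}
\left(\int_{\R^N} |I_\ell u(x)|^q \, d\nu(x)\right)^{1/q} = \left(\int_{\R^N}\left|\int_{\R^N} K(x,y) g(y)\, dy\right|^q d\nu(x)\right)^{1/q} \leq C \int_{\R^N} |g(x)| w(x)\, dx = C\|Ru\|_{L^1_w},
\end{equation*}
for all $u$ with $Ru \in L^1_w(\R^N,\R^N)$, after a density argument extending from $C_c^\infty$ to this class.

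The only genuinely delicate point is justifying the representation formula and the application of Lemma~\ref{main} for $u$ with merely $Ru \in L^1_w$ rather than $u \in C_c^\infty$: one should first prove the inequality for $u \in C_c^\infty(\R^N)$ (where all integrals converge absolutely and $Ru$ is smooth with the stated decay, so $\operatorname{curl}(Ru)=0$ classically), and then pass to the limit. I expect this density/approximation step to be the main obstacle, since $L^1_w$ need not be well-behaved and one must ensure both that $C_c^\infty$ functions with the relevant control are dense and that the left-hand side is lower semicontinuous under the approximation (Fatou's lemma against $\nu$ handles the latter once $I_\ell u_k \to I_\ell u$ along a subsequence $\nu$-a.e., which follows from the convolution structure). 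Everything else is a routine verification of the hypotheses of Lemma~\ref{main}.
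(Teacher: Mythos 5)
Your argument is exactly the paper's: write $u=-\sum_j R_j(R_ju)$ via $R^2=-\mathrm{Id}$, represent $I_\ell u$ through the kernels of $I_\ell\circ R_j$ (which are homogeneous of degree $\ell-N$ and hence satisfy \eqref{kc} and \eqref{kd}), observe that $g=Ru$ lies in the kernel of the cocanceling curl operator since $\partial_i R_j u=\partial_j R_i u$, and apply Lemma~\ref{main}. The only superfluous part is your final density discussion: the theorem is stated for $u\in C_c^{\infty}(\R^N)$ with the additional hypothesis $Ru\in L^1_w$, so Lemma~\ref{main} applies directly to $g=Ru$ and no approximation step is needed.
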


The previous result recovers \cite[Theorem 6.4]{DNP} taking $d\nu:=|x|^{-\beta}dx$, $w(x):=|x|^{\alpha}$, with $\alpha+\beta>0$, $0\leq \alpha <1$ and by scaling $\displaystyle{\frac{1}{q}=1+\frac{\alpha+\beta - \ell}{N}}$. The proof is a direct consequence of the identity $R^{2}=-Id$ and the Lemma \ref{main} applied to the kernels $K_{j}$ given by $\widehat{K_{j}}(\xi):=(I_{\ell}\circ R_{j})^{\,\widehat{\,\,}}(\xi)$ that satisfy
 \eqref{kc} and \eqref{kd}, and $g:=Ru$ that belongs to the kernel of some cocanceling operator,
 since $\partial_{x_{i}}R_{j}=\partial_{x_{j}}R_{i} $ for $i \neq j$ (see \cite{DNP} for more details).

\subsection{Trace and fractional inequalities for vector fields}

Next we state the following extension of limiting trace inequality presented at \cite[Theorem 5.1]{BP}.

\begin{theorem} \label{raitathm}

Let $A(D)$ be a homogeneous linear differential operator of order m on $\R^{N}$, $N\ge2$, from $E$ to $F$. If $A(D)$ is elliptic and canceling, then for all $\nu \in \M_{+}(\R^N)$ and every weight $w$ 
satisfying {\eqref{12a} and \eqref{13a} with $\ell=1$ and $q=1$} 
there exists $C>0$ such that 
\begin{equation}\label{raita1}
\int_{\R^N} \left|D^{m-1}u(x)\right| \, d\nu   \leq C 
		 \|A(D)u\|_{L_{w}^{1}}, \quad  {\text{for all}} \,\,\, u \in C_{c}^{\infty}(\R^{N},E).
\end{equation}
\end{theorem}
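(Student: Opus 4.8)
The plan is to reduce Theorem \ref{raitathm} to an application of Lemma \ref{main}, exactly in the spirit of the proof of Theorem \ref{theoB}. The starting point is the representation of $D^{m-1}u$ in terms of $A(D)u$ coming from ellipticity. Since $A(D)$ is elliptic of order $m$, we have the fundamental-solution identity \eqref{decfrac}, namely $u(x)=\int_{\R^N}K(x-y)[A(D)u(y)]\,dy$ with $K$ homogeneous of degree $-N+m$; differentiating under the integral $m-1$ times gives, for each multi-index $|\beta|=m-1$,
\[
\partial^{\beta}u(x)=\int_{\R^N}\partial^{\beta}K(x-y)[A(D)u(y)]\,dy,
\]
where the kernel $K_{\beta}(x,y):=\partial^{\beta}K(x-y)$ is homogeneous of degree $-N+m-(m-1)=-N+1$, hence satisfies a bound $|K_{\beta}(x,y)|\lesssim|x-y|^{1-N}$, i.e.\ estimate \eqref{kc} with $\ell=1$. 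Moreover $\partial_y K_\beta(x-y)$ is homogeneous of degree $-N$, so $|\partial_y K_\beta(x-y)|\lesssim|x-y|^{-N}\lesssim|x|^{-N}$ whenever $2|y|\le|x|$, and the mean value inequality then yields \eqref{kd} with $\ell=1$. (One should check that the lower-order terms produced by differentiating a kernel that is only a homogeneous \emph{distribution} — i.e.\ possible Dirac-type contributions at the origin when $m-1\ge N$ — do not appear here; since $1\le m<N$ we have $m-1<N$, so $\partial^\beta K$ remains a locally integrable function homogeneous of degree $1-N>-N$, and no delta terms arise.)

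Next I would invoke the cocanceling structure. Because $A(D)$ is elliptic and canceling, Van Schaftingen's Proposition 4.2 (quoted in Subsection \ref{subsect cocanc}) provides a cocanceling operator $L(D)$ with $\ker L(\xi)=A(\xi)[E]$, so that $g:=A(D)u$ satisfies $L(D)g=0$ in the sense of distributions; and $g\in L^1_w(\R^N,F)$ by hypothesis on the right-hand side of \eqref{raita1}. I can therefore apply Lemma \ref{main} with $q=1$, $\ell=1$, $\nu=\nu$, the given weight $w$, and each kernel $K_\beta$, since the hypotheses \eqref{12a} and \eqref{13a} with $\ell=1$, $q=1$ are assumed; this gives
\[
\int_{\R^N}\Big|\int_{\R^N}K_\beta(x,y)g(y)\,dy\Big|\,d\nu(x)\le C_\beta\int_{\R^N}|g(x)|\,w(x)\,dx
=C_\beta\|A(D)u\|_{L^1_w}
\]
for every $|\beta|=m-1$. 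Summing over the finitely many such multi-indices and using $|D^{m-1}u(x)|\lesssim\sum_{|\beta|=m-1}|\partial^\beta u(x)|$ yields \eqref{raita1} with $C=\sum_\beta C_\beta$.

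The routine steps are the kernel estimates and the bookkeeping over multi-indices; the one point deserving genuine care is the passage from $u$ to $D^{m-1}u$ at the level of the singular kernel $K$. Concretely, one must justify that differentiating the representation \eqref{decfrac} $(m-1)$ times is legitimate and produces exactly the claimed homogeneous kernels — i.e.\ that $\partial^\beta\big(K*A(D)u\big)=(\partial^\beta K)*A(D)u$ with no boundary or distributional correction terms, and that $\partial^\beta K$ genuinely satisfies \eqref{kc}--\eqref{kd}. This is where the hypothesis $m-1<N$ is used, and it is the analytic heart of the argument; everything else is a direct citation of Lemma \ref{main} together with the elliptic-plus-canceling $\Rightarrow$ cocanceling mechanism already exploited for Theorem \ref{theoB}. (When $m=1$ this subtlety disappears entirely, $D^{m-1}u=u$, and the statement is literally the $\ell=1$, $q=1$ case of the argument following \eqref{eqmain}.)
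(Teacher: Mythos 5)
Your proposal is correct and follows essentially the same route as the paper: represent $D^{m-1}u$ as a convolution of $A(D)u$ against a kernel homogeneous of degree $-N+1$ (the paper writes it directly via the symbol $\widehat{K_{\alpha}}(\xi)=\xi^{\alpha}(A^{\ast}\circ A)^{-1}(\xi)A^{\ast}(\xi)$, which is exactly your $\partial^{\beta}K$), check \eqref{kc}--\eqref{kd} with $\ell=1$, and apply Lemma \ref{main} with $q=1$ to $g=A(D)u$, which lies in the kernel of the cocanceling operator produced by ellipticity and cancellation. Your extra care about the absence of distributional correction terms when differentiating $K$ (degree $-N+1>-N$) is a sound justification of the step the paper phrases as ``adapting the identity \eqref{decfrac}.''
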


\begin{proof}
Adapting the identity \eqref{decfrac}, for each $|\alpha|=m-1$ we may write  
$$D^{\alpha}u(x)=\int_{\R^{N}}K_{\alpha}(x-y)[A(D)u(y)]\,dy,$$ 
where $\widehat{K_{\alpha}}(\xi):=\xi^{\alpha}(A^{\ast}\circ A)^{-1}(\xi)A^{\ast}(\xi)$, 
which satisfies \eqref{kc} and \eqref{kd} for $\ell=1$. Thus, the estimate \eqref{raita1} follows
by Lemma \ref{main} for $q=1$, as showed in the proof of inequality \eqref{eqmain}.
\end{proof}

We can also prove the following weighted Hardy-Littlewood-Sobolev inequality associated to fractional derivatives:

\begin{theorem}
Let $A(D)$ be a homogeneous linear differential operator of order {$m$} on $\R^{N}$, $N \geq 2$, from $E$ to $F$, and assume that  $1 \leq q <\infty$, $0<\ell<N$ and $\ell \leq m$. If $A(D)$ is elliptic and canceling, then for all $\nu \in \M_{+}(\R^N)$ and every weight $w$ satisfying {\eqref{12a} and \eqref{13a}}
there exists $C>0$ such that 
\begin{equation*}
\left(\int_{\R^N} \left| (-\Delta)^{(m-\ell)/2}u(x)\right|^{q} \, d\nu \right)^{1/q}  \leq C  
		 \|A(D)u\|_{L^{1}_w}, \quad {\text{for all}} \,\,\, u \in C_{c}^{\infty}(\R^{N},E).
\end{equation*}
\end{theorem}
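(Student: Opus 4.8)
The plan is to reduce the statement to an application of Lemma \ref{main}, exactly as in the proof of Theorem \ref{raitathm}. Since $A(D)$ is elliptic, the representation formula \eqref{decfrac} holds with the kernel $K(x)$ whose Fourier transform is $H(\xi)=(A^{\ast}\circ A)^{-1}(\xi)A^{\ast}(\xi)$. To capture the fractional derivative $(-\Delta)^{(m-\ell)/2}$, I would compose this with the Fourier multiplier $|\xi|^{m-\ell}$: for $u \in C_c^\infty(\R^N,E)$ write
\begin{equation*}
(-\Delta)^{(m-\ell)/2}u(x) = \int_{\R^N} \widetilde{K}(x-y)\,[A(D)u(y)]\,dy,
\end{equation*}
where $\widehat{\widetilde{K}}(\xi) := |\xi|^{m-\ell}(A^{\ast}\circ A)^{-1}(\xi)A^{\ast}(\xi)$. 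This symbol is smooth on $\R^N\setminus\{0\}$ and homogeneous of degree $(m-\ell)-m = -\ell$; since $0<\ell<N$, its inverse Fourier transform $\widetilde{K}$ is a locally integrable tempered distribution, homogeneous of degree $-N+\ell$, and by the standard estimates for such kernels (as used for \eqref{ka}--\eqref{kb0}) it satisfies $|\widetilde{K}(x-y)| \lesssim |x-y|^{\ell-N}$ for $x\neq y$ and $|\partial_y \widetilde{K}(x-y)| \lesssim |x-y|^{\ell-N-1}$ for $2|y|\leq|x|$; the latter gives \eqref{kd} via the mean value inequality, and the former is \eqref{kc}.

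Next, set $g := A(D)u$. Because $A(D)$ is elliptic and canceling, the operator $L(D)$ from \cite[Proposition 4.2]{VS} is cocanceling and $L(D)g = L(D)(A(D)u) = 0$ in the sense of distributions; moreover $g \in L^1_w(\R^N,F)$ since $u$ is compactly supported and smooth. With the testing conditions \eqref{12a} and \eqref{13a} in hand for $\nu$ and $w$ (at the given values of $\ell$ and $q$), Lemma \ref{main} applies directly to the kernel $K(x,y):=\widetilde{K}(x-y)$ and yields
\begin{equation*}
\left(\int_{\R^N}\left|(-\Delta)^{(m-\ell)/2}u(x)\right|^q\,d\nu\right)^{1/q} = \left(\int_{\R^N}\left|\int_{\R^N}\widetilde{K}(x-y)g(y)\,dy\right|^q d\nu(x)\right)^{1/q} \leq C\int_{\R^N}|g(x)|\,w(x)\,dx = C\|A(D)u\|_{L^1_w},
\end{equation*}
which is the desired inequality.

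The one point requiring a little care — and the main (mild) obstacle — is the claim that $\widetilde{K}$ is a genuine locally integrable function satisfying the pointwise bounds \eqref{kc} and \eqref{kd}, rather than merely a tempered distribution. When $\ell < N$ this follows from the classical theory of Fourier transforms of homogeneous distributions (see \cite[p.~71]{Javier} and \cite{Stein}): a smooth symbol on $\R^N\setminus\{0\}$ that is homogeneous of degree $-\ell$ with $0<\ell<N$ has inverse Fourier transform agreeing, away from the origin, with a $C^\infty$ function homogeneous of degree $\ell-N$, and differentiating once produces homogeneity degree $\ell-N-1$; the bound \eqref{kb0}-type estimate on the region $2|y|\leq|x|$ is then immediate from homogeneity since there $|x-y|\sim|x|$. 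The hypothesis $\ell\leq m$ is used implicitly to ensure the multiplier $|\xi|^{m-\ell}$ has nonnegative order so that $\widetilde K$ is no more singular than a Riesz kernel; and the restriction $0<\ell<N$ is precisely what makes the homogeneity degree $\ell-N$ locally integrable. Everything else is a verbatim repetition of the argument establishing \eqref{eqmain} and Theorem \ref{raitathm}.
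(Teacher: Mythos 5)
Your proposal is correct and follows essentially the same route the paper intends: the paper omits the proof, noting it is analogous to Theorem \ref{raitathm}, and that argument is precisely your construction of the kernel with symbol $|\xi|^{m-\ell}(A^{\ast}\circ A)^{-1}(\xi)A^{\ast}(\xi)$, homogeneous of degree $-\ell$, verifying \eqref{kc}--\eqref{kd}, and applying Lemma \ref{main} to $g=A(D)u$, which lies in the kernel of the cocanceling operator $L(D)$. Your added care about local integrability and the pointwise kernel bounds (using $0<\ell<N$ and $\ell\leq m$) matches the paper's own treatment of \eqref{decfrac} and of the kernels $K_{\alpha}$.
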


We omit the proof, which is similar to that of Theorem \ref{raitathm}. In particular, it recovers \cite[Theorem 5.2]{BP} if {$w \equiv1$}, and the inequality (1.5) in \cite[Theorem A]{HHP} when also taking $d\nu=|x|^{-N+(N-\ell)q}\,dx$ for $1 \leq q <N/(N-\ell)$.\\

\noindent \textbf{Acknowledgments:} The authors would like to thank Prof. Laurent Moonens for his contribution in some discussions regarding solvability in weighted Lebesgue spaces. The third author gratefully acknowledges the support and hospitality of Universit\'e Paris-Saclay (Laboratoire de Math\'ematiques dOrsay), CNRS and IHES where part of this research was developed during his visiting period.

\end{document}